\documentclass[11pt]{article}
\usepackage{graphicx} 
\usepackage{amsmath,amsfonts,amsthm}
\usepackage{amssymb}
\usepackage{xcolor}
\usepackage{dsfont}
\usepackage{bbm}
\usepackage{bm}
\usepackage{enumitem}
\usepackage{tikz}

\newcommand*{\empirical}{\widehat{M}}

\newcommand*{\entropy}{H}

\newcommand*{\N}{\mathbb{N}}
\newcommand*{\Prob}{\mathbb{P}}
\newcommand*{\GWT}{\mathbb{T}}
\newcommand*{\E}{\mathbb{E}}
\newcommand*{\e}{\mathsf{e}}
\newcommand*{\typ}{\mathsf{Typ}}
\newcommand*{\typind}{\mathcal{T}yp}
\newcommand*{\odd}{\mathsf{Odd}}
\newcommand*{\oddind}{\mathcal{O}dd}
\newcommand*{\avez}{\textsf{h}(\mu)}

\newcommand*{\BRW}{\textsf{BRW}}
\newcommand*{\indempirical}{\widehat{\mathcal{M}}}
\newcommand*{\ent}{\textsf{Ent}}

\newtheorem{theorem}{Theorem}
\newtheorem{lemma}{Lemma}

\newtheorem{remark}{Remark}

\usepackage[left=1in, right=1in, bottom = 1in, top = 1in]{geometry}
\definecolor{caribbeangreen}{rgb}{0.0, 0.8, 0.6}
\definecolor{brass}{rgb}{0.71, 0.65, 0.26}
\definecolor{amethyst}{rgb}{0.6, 0.4, 0.8}

\newcommand{\rk}[1]{\textcolor{blue}{#1}} 

\title{Phase transition for the asymptotic entropy of branching random walks on groups}
\author{Jeremie Brieussel, Robin Kaiser, Martin Klötzer, Ecaterina Sava-Huss}
\date{\today}

\begin{document}

\maketitle

\begin{abstract}
We consider supercritical branching random walks (BRW shortly) on countable 
groups $G$ and we prove that the asymptotic entropy of the empirical distributions of the BRW has a phase transition at $\rho_* = \e^{\avez}$, where $\avez$ is the asymptotic entropy of the underlying random walk on $G$ with step distribution $\mu$.
Below this value $\rho_*$, the asymptotic empirical entropy of BRW equals the logarithm of the exponential growth rate of the population. Above this value, it is constantly equal to the asymptotic entropy of the underlying random walk. In particular, this answers questions from Kaimanovich-Woess \cite[Section 6.3]{WoessKaim} about the existence and the behavior of the asymptotic entropy.
\end{abstract}
\textbf{2020 Mathematics Subject Classification.} 60J80, 60F05, 60F15.\\
\textbf{Keywords.} branching process, random walk, Shannon entropy, phase transition, entropic tube.

\section{Introduction}

Given a countable group $G$, an offspring distribution $\pi \in \textsf{Prob}(\mathbb{N}_0)$ and a probability measure $\mu \in \textsf{Prob}(G)$, a branching random walk $\mathrm{BRW}(G,\pi,\mu)$ on $G$ couples the dynamics of a Galton-Watson process to the dynamics of a random walk on $G$ with step distribution $\mu$. In each discrete time step,  every particle alive independently generates a $\pi$-distributed number of offspring, and each of these offspring subsequently takes one step according to $\mu$.
It is well known that when spatial movement on $G$ is ignored (that is, when looking only at the Galton–Watson process with offspring distribution $\pi$), the model exhibits an extinction–survival phase transition governed by $\rho=\mathbb{E}\pi$, the expected number of offspring of a single particle. This transition occurs at the critical value $\rho=1$; when  $\rho\leq 1$, the process dies out  almost surely, and if $\rho>1$, then it survives with positive probability. 

On countable 
groups $G$, branching random walks exhibit not only the classical extinction–survival phase transition, but also a second one: the transition from weak to strong survival. We say that $\mathrm{BRW}(G,\pi,\mu)$ survives weakly if the population survives almost surely, but every finite subset of $G$ will eventually be vacated and never visited again. This second threshold occurs at  $\rho = 1/r$, 
where $r\in(0,1]$ is the spectral radius of the underlying random walk $(Y_n)_{n\in \N}$ with step distribution $\mu$; see Benjamini-Peres \cite{recurrence-brw-benjamini-peres-1994} and Müller \cite{mueller-brw} for details. By Kesten \cite{Kesten_amenability}, 
for symmetric and non-degenerate measures $\mu$, the spectral radius $r$ is equal to one if and only if $G$ is amenable. In particular the phase transitions extinction-survival and weak-strong survival are distinct if and only if $G$ is nonamenable.  

The phase transition at $1/r$ is also reflected in the boundary behaviour of  branching random walks. Hueter-Lalley \cite{hueter-lalley-brw-trees-2000} considered BRW on homogeneous trees $\mathbb{T}$ and proved that if $\rho\leq 1/r$,  then the Hausdorff dimension of the limit set $\Lambda$ of the branching random walk with respect to a natural metric on the boundary $\partial \mathbb{T}$ of $\mathbb{T}$ is almost surely constant and obtained a precise formula for it. Their result has been extended to BRW on free products of finitely generated groups in Candellero-Gilch-Müller \cite{BRWOnFreeProducts}. On hyperbolic groups in Sidoravicius-Wang-Xiang \cite{LimSetHyperbolic} it is shown that the Hausdorff dimension of the limit set $\Lambda$ of BRW is less or equal than half of the dimension of the hyperbolic boundary of the underlying group $G$. In the same regime $\rho\in(1,1/r\big]$, it has been proven by Dussaule-Wang-Yang \cite{dussaule-wang-yang-brw-hyper} that the trace of the BRW on relatively hyperbolic groups equals the growth rate of the Green function of the underlying random walk. 

Considering the population statistics of branching random walks, several aspects have been investigated in the past. Stam \cite{stam} proved that the number of particles at distance of $\sqrt{n}$ around the speed of the underlying random walk is described by a normal distribution. This result has recently been extended to arbitrary transitive graphs in \cite{Kloetzer-Kaiser-Sava-Huss-EJP26}. Another aspect that has been studied intensively, is the maximal distance travelled by the ensemble of branching particles; see \cite{logcorr2, maxconv, berestycki2026biasedbranchingrandomwalks}. 
For branching random walks on general state spaces which are endowed with a compactification (e.g.\ end compactification, Martin compactification), it has been recently shown in Candellero-Hutchcroft \cite{Candellero-Hutchcroft-BRW-2023} and Woess-Kaimanovich \cite{WoessKaim} that the empirical distribution of the population converges almost surely (in the weak sense) to a random measure on the boundary of the respective compactification.
The empirical distribution of the branching random walk $\mathrm{BRW}(G,\pi,\mu)$ in generation $n$ is defined as the random probability measure
$$\empirical_n:G\to [0,1]:x\mapsto \frac{\#\{\text{Individuals at } x \text{ in generation } n\}}{Z_n},$$
where $Z_n$ is the total number of particles alive in generation $n$. 
Limit results for the empirical distributions and for the maximum and minimum displacement in branching random walks on groups have been recently investigated in Klötzer-Kaiser-Sava-Huss \cite{Kloetzer-Kaiser-Sava-Huss-EJP26} and Klötzer-Kolesko-Kaiser-Sava-Huss \cite{Kloetzer-Kolesko-Kaiser-Sava-Huss-MaxMin-2026}. There is a huge amount of literature on several aspects of branching random walks and their limit behaviour, but 
most of them deal with the case when the underlying state space is $\mathbb{Z}^d$. When the underlying state space has itself a rich geometry at infinity, not much is known and it deserves future investigation. Recently, Geldbach \cite{Geldbach-2026-BBM} investigated the Hausdorff dimension of the empirical limit measure for branching Brownian motion on hyperbolic spaces.

\begin{figure}
\centering
\begin{tikzpicture}
\draw[->] (0,0) -- (7,0);

\draw[->, thick] (0,0) -- (0,2.2);

\node[right] at (0,1.4) {\small $\displaystyle \lim_{n\to\infty}\frac{H(\widehat{M}_n)}{n}$};

\fill (0,0) circle (2pt);
\fill (2,0) circle (2pt);
\fill (2.6,0) circle (2pt);
\fill (4.5,0) circle (2pt);

\node[below] at (0,0) {$0$};
\node[below] at (2,0) {$1$};
\node[below] at (2.6,0) {$1/r$};
\node[below] at (4.5,0) {$e^{\avez}$};

\node[below right] at (7,0) {$\rho$};

\draw[thick, blue, domain=0:1, samples=100]
plot ({2+2.5*\x},{ln(1+3*\x)});

\node[blue] at (3.5,0.7) {\small $\log\rho$};

\draw[thick, red] (4.5,1.386) -- (6.2,1.386);

\node[red] at (5.35,1.6) {\small $\avez$};

\draw[thick, purple] (0,-0.9) -- (2,-0.9);
\draw[thick, yellow] (2,-0.9) -- (7,-0.9);

\node[purple] at (1,-0.7) {\small extinction};
\node[yellow] at (4.5,-0.7) {\small survival};

\draw[thick, green] (2,-1.4) -- (2.6,-1.4);
\draw[thick, pink] (2.6,-1.4) -- (7,-1.4);

\node[green] at (1.5,-1.2) {\small weak survival};
\node[pink] at (4.5,-1.2) {\small strong survival};
\end{tikzpicture}
\caption{The three phase transitions in a branching random walk}\label{fig:phase-transition}
\end{figure}

For a random walk with step distribution $\mu$ on a group $G$, an important statistic is its Avez asymptotic entropy, defined as $\avez := \lim_n H(\mu^{*n})/n$, where $H(\mu^{*n})$ is the Shannon entropy of $\mu^{*n}$~\cite{Avez}. Its relevance is firstly due to the harmonic analysis fact that it vanishes if and only if every $\mu$-harmonic function is constant; that is, the random walk has a trivial Poisson boundary~\cite{Der80,KV83}. From the point of view of measured dynamics, the asymptotic entropy equals the Furstenberg entropy of the Poisson boundary and is the maximal value of Furstenberg entropy over all $\mu$-stationary $G$-actions~\cite{KV83}. The Furstenberg entropy is an invariant of an action measuring its lack of invariance~\cite{KV83,NZ1}. Let us also mention a geometric aspect that the asymptotic entropy equals the rate of escape of the random walk with respect to the Green distance \cite{BP94,BHM08}. Regarding the rate of escape $\ell$ of the random walks with respect to a word metric, the asymptotic entropy satisfies the fundamental Guivarch'inequality $\avez \le \ell v$, where $v$ is the exponential volume growth~\cite{Guivarch80}. For hyperbolic groups, the equality case corresponds to the situation where the harmonic measure on the visual boundary is in the measure class of the Patterson-Sullivan measure~\cite{GMM18}. 

The Avez entropy can also be compared to the spectral radius. If the random walk $(Y_n)_n$ is symmetric, it is also well known that $-\log r \leq \avez$ (see~\cite{Avez}). A sharper lower bound $\avez \ge 2\sqrt{1-r}\ \mathrm{arctanh}(\sqrt{1-r})$ was given in~\cite[Theorem 1.2]{GMM} (note the different definition of $r$ in this reference). This sharper lower bound implies in particular that the Avez inequality is strict $-\log r < \avez$, unless $\avez=0$. The case when $\avez=0$ is often referred to as a Liouville random walk in literature.

The rate of escape, for a word metric, of $\mathrm{BRW}(G,\pi,\mu)$ and its relation with the rate of escape of the underlying random walk $(G,\mu)$ has been recently investigated in Klötzer-Kaiser-Sava-Huss~\cite{Kloetzer-Kaiser-Sava-Huss-EJP26}. In the current work we investigate the asymptotic entropy of the empirical distributions of $\mathrm{BRW}(G,\pi,\mu)$ and we prove that there is a third phase transition going from a small-$\rho$ regime in which we have asymptotically maximal entropy to a large-$\rho$ regime where we have asymptotically an entropy which stays constant as $\rho$ increases. This behaviour and the corresponding phase transitions in the context of branching random walks are graphically illustrated in Figure~\ref{fig:phase-transition}.
The critical value for this phase transition occurs at $\rho_*=\e^{\avez}$. Note that for nonamenable groups, the quantity $\e^{\avez}$ is always bigger than $1$, but the inequality $\e^{\avez}>1$ does not imply nonamenability, as the random walk with drift on the lamplighter group $\mathbb{Z}_2 \wr \mathbb{Z}$ provides a counterexample~\cite{KV83}.

The focus of the current work is on the (random) Shannon entropy $\entropy(\empirical_n)$ of the empirical distributions $\empirical_n$ of the branching random walk $\mathrm{BRW}(G,\pi,\mu)$, which is defined as
\begin{equation}\tag{Ent-BRW}\label{eq:shannon-emp-distr}
\entropy(\empirical_n)=-\sum_{x\in G}\empirical_n(x)\log\empirical_n(x).  
\end{equation}
Our work is motivated by the questions posed in Kaimanovich-Woess \cite[Section 6.3.]{WoessKaim}. Does the asymptotic entropy $\lim_n H(\empirical_n)/n$ exist almost surely, and what is its exact value given its existence? To the best of our knowledge, this is the first paper that addresses the notion of asymptotic entropy for the empirical distributions of branching random walks. Our main result answers the question of existence, and determines the exact value of the asymptotic entropy, based on the mean offspring number $\rho$ and the  step distribution $\mu$ on $G$.

\begin{theorem}[Branching random walks]\label{thm: main 1}
Let $G$ be a countable group and let $\BRW(G,\pi,\mu)$ be a branching random walk with step distribution $\mu$ on $G$ and offspring distribution $\pi$ with mean $\E[\pi]=\rho>1$. If $\mu$ has finite entropy, $\pi(0)=0$, and $\pi$ has a finite second moment, then the following holds for the entropy $H(\empirical_n)$ of the empirical distribution $\empirical_n$ of $\BRW(G,\pi,\mu)$\rk{:}
\begin{enumerate}[label = (\roman*)]
\setlength\itemsep{0em}
    \item \textbf{Small-$\rho$ regime.} If $\rho\leq \e^{\avez}$, then almost surely
    $$\lim_{n\to\infty}\frac{H(\empirical_n)}{n}=\log\rho.$$
    \item \textbf{Large-$\rho$ regime.} If $\rho > \e^{\avez}$, then almost surely
    $$\lim_{n\to\infty}\frac{H(\empirical_n)}{n} = \avez,$$
    where $\avez$ is the asymptotic entropy of the random walk $(G,\mu)$.
\end{enumerate}
\end{theorem}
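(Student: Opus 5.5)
We prove the two matching bounds
$$\limsup_n \frac{H(\empirical_n)}{n}\le \min\{\log\rho,\avez\}, \qquad \liminf_n \frac{H(\empirical_n)}{n}\ge \min\{\log\rho,\avez\}$$
almost surely. Since $\pi(0)=0$ and $\pi$ has a finite second moment, the Kesten–Stigum theorem gives $Z_n/\rho^n\to W>0$ almost surely. Throughout, we use the genealogical description: each of the $Z_n$ particles has a position equal to a product of $n$ i.i.d.\ $\mu$-steps along its ancestral line. Individual particles are therefore $\mu^{*n}$-distributed, but different particles are correlated through their common ancestors.

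\textbf{Upper bounds.} The bound $H(\empirical_n)\le \log Z_n=n\log\rho+O(1)$ is immediate, since $\empirical_n$ is supported on at most $Z_n$ points.

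For the bound by $\avez$ we use the entropic tube of the random walk. By Kaimanovich–Vershik/Derriennic (Shannon–McMillan–Breiman for random walks), $-\frac1n\log\mu^{*n}(Y_n)\to\avez$ in probability. Fix $\varepsilon>0$ and let
$$\typ_n=\{x\in G:\ \mu^{*n}(x)\ge e^{-n(\avez+\varepsilon)}\},$$
so that $|\typ_n|\le e^{n(\avez+\varepsilon)}$ and $\mu^{*n}(\typ_n)\to1$. The expected proportion of particles outside $\typ_n$ is exactly $\mu^{*n}(\typ_n^c)=:\delta_n\to0$.

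Split $\empirical_n=(1-p_n)\nu_n+p_n\nu_n'$, where $\nu_n$ lives on $\typ_n$ and $\nu_n'$ on its complement. Then
$$H(\empirical_n)\le \log 2+(1-p_n)\,n(\avez+\varepsilon)+p_n\log Z_n ,$$
so it suffices to show that $p_n\to0$ almost surely. By Markov's inequality $p_n\to0$ in probability. To get almost sure convergence, condition on generation $k=\lfloor \varepsilon n\rfloor$. The $Z_k\sim\rho^k$ subtrees are independent, so a law of large numbers or second-moment estimate over these subtrees gives concentration with summable error. The resulting displacement by the generation-$k$ position only enlarges the tube by a factor of order $e^{O(\varepsilon n)}$, using $H(\mu^{*k})\le k H(\mu)$ and a similar typicality argument for $\mu^{*k}$. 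Borel–Cantelli then completes this step.

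\textbf{Lower bounds.} We use the inequality $H(\nu)\ge -\log\max_x\nu(x)$, together with the mixture bound that discards small parts. It suffices to show that, with high probability, almost all mass of $\empirical_n$ sits on points $x$ with
$$\empirical_n(x)\le \exp\bigl(-n(\min\{\log\rho,\avez\}-\varepsilon)\bigr).$$

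Write $N_n(x)$ for the number of generation-$n$ particles at $x$. Its conditional first moment is $\E N_n(x)=\rho^n\mu^{*n}(x)$. For a typical $x$, meaning $\mu^{*n}(x)\approx e^{-n\avez}$, we have two cases:
\begin{enumerate}[label=(\alph*)]
\item if $\rho>e^{\avez}$, then $N_n(x)\approx \rho^n e^{-n\avez}$, so $\empirical_n(x)\approx e^{-n\avez}$;
\item if $\rho\le e^{\avez}$, then typical points carry $O(1)$ particles, giving $\empirical_n(x)\approx\rho^{-n}$.
\end{enumerate}

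To make this rigorous we bound the size-biased quantity
$$\E\sum_x \frac{N_n(x)^2}{Z_n},$$
which represents the expected multiplicity at the location of a uniformly chosen particle. Decompose according to the generation $j$ of the most recent common ancestor of two particles. Pairs that split at generation $j$ land at the same point with probability $\sum_x\mu^{*(n-j)}(x)^2\le \max_x\mu^{*(n-j)}(x)$, and the number of such pairs is of order $\rho^{2n-j}$, using the second moment of $\pi$.

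The key difficulty is that $\max_x\mu^{*m}(x)$ can be much larger than the typical value $e^{-m\avez}$. We therefore need to work inside the typical set instead. We restrict to pairs whose relative displacement $\mu^{*(n-j)}$ is typical, and handle atypical pairs by the decay of $\mu^{*m}(\typ_m^c)$. This yields that the tagged particle's site has multiplicity at most $\exp\bigl(n(\max\{\log\rho-\avez,0\}+\varepsilon)\bigr)$ with high probability. We then upgrade this to almost sure convergence by the same subtree-conditioning and Borel–Cantelli scheme as in the upper bound.

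I expect this lower bound, namely controlling the collision mass without uniform bounds on $\mu^{*m}$ and in the critical case $\rho=e^{\avez}$, to be the main obstacle. The critical case follows once the $\varepsilon$-losses are absorbed.
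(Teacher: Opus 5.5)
Your upper bounds hold. The subtree-conditioning route to $\limsup_n H(\empirical_n)/n\le\avez$ is a valid alternative to the paper's support count for tube particles, provided you replace your fixed site set by the random set $\bigcup_{u\in\GWT_k}X_u T_{n-k}$ with $T_m=\{z:\mu^{*m}(z)\ge e^{-m(\avez+\varepsilon)}\}$. Given $\mathcal{F}_k$, the expected fraction of particles whose increment after generation $k$ leaves $T_{n-k}$ is then the deterministic number $\mu^{*(n-k)}(T_{n-k}^c)$.

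The gap is in the lower bound, at ``restrict to pairs whose relative displacement is typical, and handle atypical pairs by the decay of $\mu^{*m}(\typ_m^c)$''. Keeping the atypical pairs in the second moment does not work. Their contribution is of order $\sum_j\rho^{2n-j}\sum_{z\notin\typ_{n-j}}\mu^{*(n-j)}(z)^2$, which is governed by collision probabilities, not by the mass $\mu^{*m}(\typ_m^c)$. For simple random walk on $F_2$, $\sum_z\mu^{*m}(z)^2=\mu^{*2m}(e)$ is of exponential order $r^{2m}=(3/4)^m$, while $e^{-\avez}=3^{-1/2}$. So the terms with small $j$ exceed your target by a factor exponential in $n$ once $\rho>4/3$, in both regimes. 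Discarding pairs is not possible either. The square $\sum_x N_n(x)^2$ can only be restricted by removing particles, and a kept particle must then be typical relative to every split generation $j\in\{0,\dots,n\}$ at once. A union bound over $j$ gives $\sum_{m\le n}\mu^{*m}(\typ_m^c)$, which need not be small or even bounded: Shannon's theorem has no rate, and with only proportional slack $\varepsilon m$ the small-$m$ terms need not be small.

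The missing ingredient is the entropic tube, a per-particle condition on the whole ancestral line: $|-\log\mu^{*k}(X_{v_k})-k\avez|\le k\delta+K$ for all $k\le n$, with an additive slack $K$. The \emph{almost sure} Shannon theorem gives $\Prob(\ent_{n,\delta,K})\ge1-\varepsilon$ uniformly in $n$ (Lemma~\ref{lem: entropic-tube-small}); convergence in probability, which is all you invoke, does not suffice. The relative-displacement control you want then follows from Chapman--Kolmogorov. If two tube particles split at $y$ in generation $k$ and meet at $x$, then $\mu^{*(n-k)}(y^{-1}x)\le\mu^{*n}(x)/\mu^{*k}(y)\le e^{-(n-k)\avez+(n+k)\delta+2K}$, which gives Lemma~\ref{lem: two norm typical}. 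Almost sure negligibility of the discarded particles also does not follow directly from your subtree-conditioning scheme, because typicality relative to splits before generation $k$ is not determined by the generation-$k$ subtrees. The paper instead uses that descendants of odd particles are odd. Hence $\#\odd_n/\rho^n$ is a submartingale that converges almost surely, and Fatou gives $\E[W^{\text{odd}}_{\delta,K}]\le\sup_n\Prob(\ent_{n,\delta,K}^c)\to0$ as $K\to\infty$ (Lemma~\ref{lem: lemma on typical and odd particles}).
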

As we see from Theorem \ref{thm: main 1}, the asymptotic entropy undergoes a phase transition that can be described as follows: if the mean offspring number $\rho$ is too small (specifically, if $\log\rho\leq \avez$), the amount of particles is not sufficient to properly probe the underlying step distribution $\mu$; the asymptotic entropy is capped at $\log\rho$, and this describes the maximum information the branching particles can obtain. If $\rho$ becomes large enough (specifically, if $\log\rho>\avez$), the cluster of particles can properly recover enough information about $\mu$, and the asymptotic entropy evaluates to $\avez$. 

Our result implies that heuristically, there are three phases for surviving BRWs on a group, all distinct in the case of a nonamenable group. Firstly when the population growth rate is small $\rho \leq 1/r$, i.e. in the weak survival case, the particles eventually leave finite sets: the population is wandering. Secondly when the population grows moderately $1/r<\rho\leq \e^{\avez}$, particles are spread out and typically do not gather into (exponentially big) clusters: the population is rural. Finally, when the population growth rate is high $\rho>\e^{\avez}$, the particles cluster sufficiently: the population is urban.

We want to emphasize that establishing the existence of the asymptotic entropy of the empirical distributions is already a highly non-trivial task. Tools such as Kingman's subadditive theorem are not available for the asymptotic entropy of the empirical distributions, and in proving Theorem \ref{thm: main 1} we have to control the whole trajectory of the particles alive at generation $n$.

We also prove a similar result in the case where we eliminate all the dependencies of the random walks imposed by the Galton-Watson process. We consider $\rho^n$ (this is the expected number of particles in the $n$-th generation of a branching process) independent random walks starting at the identity of $G$. We believe that this result might be of independent interest.

\begin{theorem}[Independent random walks]\label{thm: main 2}
 For each $n \in \mathbb{N}$, let $\{X_{n,k}\}_{k=1}^{\rho^n}$ be i.i.d. $G$-valued random variables with common law $X_{n,1} \sim \mu^{*n}$. Define the empirical measure
$$\indempirical_n(x) := \frac{1}{\rho^n}\sum_{k=1}^{\rho^n} \mathds1_{\{X_{n,k} = x\}}, \qquad x \in G,$$
and $H(\indempirical_n)$ as in \eqref{eq:shannon-emp-distr} with $\indempirical_n(x)$ instead of $\empirical_n$.
If $\mu$ has  finite entropy and $\rho>1$, then the following holds. 
\begin{enumerate}[label = (\roman*)]
\setlength\itemsep{0em}
    \item \textbf{Small-$\rho$ regime.} If $\rho\leq \e^{\avez}$, then almost surely
    $$\lim_{n\to\infty}\frac{H(\indempirical_n)}{n}=\log\rho.$$
    \item  \textbf{Large-$\rho$ regime.} If $\rho> \e^{\avez}$, then almost surely
    $$\lim_{n\to\infty}\frac{H(\indempirical_n)}{n} = \avez.$$
\end{enumerate}
\end{theorem}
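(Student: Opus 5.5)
We will prove Theorem~\ref{thm: main 2} by matching an upper bound and a lower bound. Throughout, write $N=N_n=\rho^n$ (understood as $\lfloor\rho^n\rfloor$) and $h=\avez$.

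\textbf{Trivial upper bound.} Since $\indempirical_n$ is supported on at most $N$ points, we always have $H(\indempirical_n)\le \log N=n\log\rho$.

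\textbf{Entropic tube.} The main tool for the other bounds is the Shannon--McMillan--Breiman type concentration for $\mu^{*n}$, which holds when $H(\mu)<\infty$ (Kaimanovich--Vershik). For every $\varepsilon>0$ it gives $-\frac1n\log\mu^{*n}(Y_n)\to h$ in probability. We then define the typical set
\[
T_n=\{x:\ \e^{-n(h+\varepsilon)}\le \mu^{*n}(x)\le \e^{-n(h-\varepsilon)}\},
\]
which satisfies $\mu^{*n}(T_n)\to1$ and $|T_n|\le \e^{n(h+\varepsilon)}$.

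\textbf{Upper bound $h$ when $\rho>\e^{h}$.} Split $\indempirical_n=\alpha\nu_1+(1-\alpha)\nu_2$, where $\nu_1$ is the empirical measure restricted to $T_n$ and $\alpha=\indempirical_n(T_n)$. Then
\[
H(\indempirical_n)\le \log 2+\alpha\log|T_n|+(1-\alpha)\log N\le \log 2+ n(h+\varepsilon)+(1-\alpha)n\log\rho .
\]
By Hoeffding, $1-\alpha\le \mu^{*n}(T_n^c)+\delta$ with probability at least $1-\e^{-2N\delta^2}$, which is summable in $n$. Borel--Cantelli then gives $\limsup_n H(\indempirical_n)/n\le h+\varepsilon+\delta\log\rho$, and letting $\varepsilon,\delta\to0$ yields the bound $h$.

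\textbf{Lower bounds.} Here we use the atoms $\indempirical_n(x)$ directly. Because $H(\nu)\ge -\log\max_x\nu(x)$ is too weak, we instead show that most of the mass of $\indempirical_n$ sits on points of small empirical mass. Fix a target $\gamma=\min(\log\rho,h)-2\varepsilon$ and call an index $k$ \emph{good} if $X_{n,k}\in T_n$ and $\indempirical_n(X_{n,k})\le \e^{-n\gamma}$. If the fraction of good indices is at least $1-\delta$, then
\[
H(\indempirical_n)\ge \sum_{x\ \text{good}}\indempirical_n(x)\bigl(-\log\indempirical_n(x)\bigr)\ge (1-\delta)n\gamma .
\]
To bound the number of bad indices, note that for $x\in T_n$ the count $\#\{k:X_{n,k}=x\}$ is $\mathrm{Bin}(N,\mu^{*n}(x))$ with mean at most $N\e^{-n(h-\varepsilon)}$. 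We then distinguish two regimes.

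\emph{Case $\log\rho\le h$.} Consider the threshold count $N\e^{-n\gamma}=\e^{n(\log\rho-\gamma)}$. Since $\log\rho-\gamma\ge 2\varepsilon$, this threshold exceeds the mean $\rho^n\e^{-n(h-\varepsilon)}\le \e^{n\varepsilon}$ by a factor of at least $\e^{n\varepsilon}$. A Chernoff bound then shows that the number of indices landing on overcrowded typical sites is small. Concretely, we take the expectation of $\sum_k \mathbf 1\{\text{$k$ bad, $X_{n,k}\in T_n$}\}$: each $k$ shares its site with $\mathrm{Bin}(N-1,p)$ others where $p\le \e^{-n(h-\varepsilon)}$, so the probability is super-exponentially small. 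The number of indices outside $T_n$ is controlled by Hoeffding as before.

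\emph{Case $\log\rho>h$.} Here $\gamma=h-2\varepsilon$. The count at $x$ has mean at most $N\e^{-n(h-\varepsilon)}$, i.e.\ $\indempirical_n(x)$ has mean at most $\e^{-n(h-\varepsilon)}$, while its fluctuations are of relative size $(Np)^{-1/2}$. A Chernoff bound again shows $\indempirical_n(x)\le \e^{-n(h-2\varepsilon)}$ except with super-exponentially small probability; a union bound over $|T_n|\le \e^{n(h+\varepsilon)}$ sites handles all typical sites simultaneously.

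In both cases a Markov/Hoeffding estimate on the count of bad indices gives failure probabilities summable in $n$, so Borel--Cantelli applies. This yields $\liminf_n H(\indempirical_n)/n\ge\min(\log\rho,h)$, which together with the upper bounds proves both regimes.

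\textbf{Main obstacle.} The delicate step is making the concentration uniform and summable. The entropic tube only gives $\mu^{*n}(T_n^c)\to0$ without a rate, but this suffices because Hoeffding over $N$ independent samples converts it into an almost-sure statement with exponentially small error. The other point requiring care is the critical case $\rho=\e^{h}$ of the lower bound, where the thresholds nearly coincide; the $\varepsilon$-slack in $\gamma$ is exactly what makes this case work.
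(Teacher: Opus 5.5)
Your proposal is correct and follows essentially the paper's route. It uses a typical set from the Kaimanovich--Vershik Shannon theorem, an exponential (Chernoff/Hoeffding) bound that makes the atypical fraction negligible almost surely, a Chernoff bound combined with a union bound over the at most $e^{n(\avez+\varepsilon)}$ typical sites to control occupancies, and the cardinality of the typical set for the upper bound when $\rho>e^{\avez}$. The only difference is in the large-$\rho$ regime: you prove just the crude one-sided bound $\indempirical_n(x)\le e^{-n(\avez-2\varepsilon)}$ on typical sites, which is all the entropy lower bound needs, whereas the paper proves the sharper two-sided comparison $c\,\mu^{*n}\le\indempirical_n^{\text{typ}}\le C\,\mu^{*n}$ (Theorem~\ref{thm: main ind empirical}(ii)), of which only the upper half is actually used.
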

Remarkably, the asymptotic entropy has the same critical threshold $\mathrm{e}^{\avez}$ for both the BRW’s empirical distribution and the empirical distribution of $\rho^{n}$ independent random walks. Although it is often the case that branching random walks are well approximated by systems of independent particles, it is still noteworthy that offspring produced at atypical vertices (i.e., vertices where $\mu^{*n}$ takes atypical values) do not affect the limiting entropy. This shows that, in the limit, the vast majority of particles in a branching random walk follow typical trajectories, along which $\mu^{*n}$ remains close to $\mathrm{e}^{-n\avez}$.

\paragraph{Proof ideas.} The main challenge in proving the existence and the value of the asymptotic entropy in Theorem \ref{thm: main 1} is controlling the influence of offspring produced in earlier generations at vertices $x\in G$ where $\mu^{*k}$ takes atypical values, on the Shannon entropy of the empirical distribution in later generations. Although this influence can be uniformly controlled when the expected offspring number $\rho$ is sufficiently small (specifically, below the inverse spectral radius), the argument fails once $\rho$ becomes too large. To overcome this, we introduce the entropic tube: we restrict our attention to branching particles whose entire lineage behaves typically. Concretely, for each particle $v\in\mathbb{T}_n$ at generation $n$ and its ancestor $v_k$ at generation $k$, we require that $-\log(\mu^{*k}(X_{v_k}))$ stays within a small perturbation of $k\avez$. Restricting the empirical distribution to particles inside this entropic tube provides the control needed to establish the existence of the asymptotic entropy and identify its value for the restricted process. We then complete the proof of Theorem \ref{thm: main 1} by showing that, with a suitable choice of perturbation and bounds on the restricted population growth, the unrestricted asymptotic entropy is well approximated by the restricted one.

In the independent-particle setting of Theorem \ref{thm: main 2}, this difficulty does not arise, since atypical behaviour of one particle has no effect on the other particles. It is therefore enough to consider the positions after $n$ steps rather than full trajectories. We first restrict our attention to those particles for which $\mu^{*n}(X_{n,k})$ is sufficiently close to $e^{-n\avez}$. In the small-$\rho$ regime we show that no clustering occurs, while in the large-$\rho$ regime the empirical distribution is close to $\mu^{*n}$. These results yield the desired statements for the typical (restricted) particles, and we then extend them to all particles by proving that the number of atypical particles is negligible in the limit.

\begin{remark}
    If $G$ is a finite group, than $\avez=0$ and the small-$\rho$ regime does not appear, while in the large-$\rho$ regimes of Theorem \ref{thm: main 1}(ii) and Theorem \ref{thm: main 2}(ii), the statements are trivial, that is,
    almost surely 
    \[
    \lim_{n\to\infty}\frac{H(\empirical_n)}{n} =0 \quad \text{and} \quad\lim_{n\to\infty}\frac{H(\indempirical_n)}{n} = 0.
    \]
\end{remark}

\textbf{Structure of the paper.} In Section \ref{sec: prelim} we introduce Galton-Watson trees, branching random walks, and empirical distributions. In Section \ref{sec: gen_results} we prove some auxiliary results needed for later proofs. In Section \ref{sec:ind-rw} we prove Theorem \ref{thm: main 2}, while in Section \ref{sec:brw} we prove Thereom \ref{thm: main 1}. We conclude with some open questions.

\section{Preliminaries}\label{sec: prelim}

In this section, we rigorously define branching random walks and Shannon entropy, and we state the assumptions on the offspring and step distributions that will be used in the proofs of our main theorems.

\textbf{Random walks on finitely generated groups.} Consider a countable group $G$ with neutral element $e$, and the (right) random walk $(Y_n)_n$ on $G$ with step distribution $\mu\in\textsf{Prob}(G)$: for every $n\in\N$, $Y_n = \xi_1\xi_2\cdots\xi_n$ where $(\xi_k)_k$ is an i.i.d.~sequence of $G$-valued random variables with distribution $\mu$. So $(Y_n)_n$ is a Markov chain on $G$ with transition matrix $P=(\mu(x^{-1}y))_{x,y\in G}$, and the $n$-step transition probabilities are: for $n\in \N$ and $x,y\in G$
$$\Prob(Y_n = y|Y_0=x) = \mu^{*n}(x^{-1}y)$$
where $\mu^{*n}$ is the $n$-fold convolution of $\mu$.

\textbf{Entropy.} For a probability measure $\nu\in\textsf{Prob}(G)$, its Shannon entropy is defined as
$$\entropy(\nu):= -\sum_{g\in G}\nu(g)\log\nu(g)$$
as long as the sum converges. If for the step distribution $\mu$ of the random walk on $G$, the entropy $H(\mu)$ is finite, then 
$$\avez:=\lim_{n\to\infty}\frac{H(\mu^{*n})}{n} = \lim_{n\to\infty}\frac{-\E[\log\mu^{*n}(Y_n)]}{n} < \infty,$$
where the limit exists due to subadditivity of $(H(\mu^{\ast n}))_{n\in \mathbb{N}}$ and Fekete's lemma. We call this quantity the Avez entropy (also asymptotic entropy).
A similar limit holds also pathwise by the Shannon type theorem \cite[Theorem 2.1]{KV83}, which states that for $\mathbb{P}$ almost every trajectory
$$\avez = \lim_{n\to\infty}\frac{-\log\mu^{*n}(Y_n)}{n}.$$

\textbf{Galton-Watson trees.}
A Galton-Watson process describes the random reproduction of particles, and the associated Galton-Watson tree $\GWT$ captures the whole genealogy. For an offspring distribution $\pi\in\textsf{Prob}(\N_0)$ we define the Galton-Watson tree inductively. Start with a root $\emptyset$. If up to generation $n$ the tree is defined, we sample for every leaf at level $n$, a random variable distributed according to $\pi$ independently of everything else, and then attach the sampled number of offspring to the level $n$ leaves. For a vertex $v\in\GWT$ we write $|v|=n$ if it is a particle of generation $n$, and for $j\leq n$ we write $v_j$ for its ancestor in generation $j$. Set 
$$\GWT_n = \{v\in\GWT: \, |v|=n\},\quad \text{and}\quad Z_n=\#\GWT_n,$$
the number of particles alive in generation $n\in\N$. It is well known (see~\cite{Harris-book}) that if the mean offspring number $\rho:=\sum_{k=0}^\infty k\pi(k)$
is $\leq 1$ the population dies out almost surely, i.e. the Galton-Watson tree is almost surely finite, and if $\rho>1$ the population survives with positive probability. If the $L\log L$ moment condition for the offspring distribution holds, that is $\sum_{k=0}^\infty \pi(k)k\log k < \infty,$
then the population martingale $W_n=Z_n/\rho^n$ has an almost sure limit $W$, which in view of Kesten-Stigum \cite{Kesten-Stigum} is almost surely finite and strictly positive on the event of survival. 

\textbf{Branching random walks (shortly BRW).} A branching random walk on $G$ with offspring distribution $\pi\in\textsf{Prob}(\N)$ and step distribution $\mu\in\textsf{Prob}(G)$, shortly written as $\BRW(G,\pi,\mu)$, is defined as the tree indexed random walk $(X_v)_{v\in\GWT}$, where $\GWT$ is the Galton-Watson tree with offspring distribution $\pi$. For every vertex $v$ in $\GWT$ we assign a $G$-valued random variable $\zeta_v\sim \mu$, such that $(\zeta_v)_{v\in \GWT}$ are i.i.d.
The $G$-valued random variable $X_v$ for some $v\in\GWT$ with $|v|=k$ is given as 
$$X_v= \zeta_\emptyset\zeta_{v_1}\zeta_{v_2}\cdots\zeta_{v_{k-1}}\zeta_v,$$
where $v_j$ is the ancestor of $v$ in generation $j\leq k$ and $\zeta_\emptyset$ is the position of the root particle. If not otherwise mentioned, we take $\zeta_\emptyset = e$. There is a phase transition in the mean offspring number $\rho$, in what concerns the behavior of a BRW.
The process $\BRW(G,\pi,\mu)$ survives weakly if the population survives, but eventually every finite subset of $G$ gets vacated, i.e.\ for every finite $K\subset G$ it holds
$$\Prob(\exists n_0\in\N:\text{ for all } n\geq n_0 \text{ and } v\in\GWT_n,\  X_v\notin K) = 1.$$
Otherwise the branching random walk survives strongly. In fact, it holds that for $\rho\leq 1/r$ the branching random walk has a weak survival phase, and for $\rho>1/r$ the branching random walk survives strongly; see Benjamini-Peres \cite{recurrence-brw-benjamini-peres-1994}.

\textbf{Empirical distribution.}
We define the empirical distribution of the branching random walk $\BRW(G,\pi,\mu)$ with offspring distribution $\pi\in\textsf{Prob}(\N)$ and step distribution $\mu\in\textsf{Prob}(G)$ on a countable group $G$ as the random probability measure on $G$ defined as 
\begin{equation}\label{eq:emp-distr-brw}
  \empirical_n(x) :=\frac{M_n(x)}{Z_n}.
\end{equation}
where $Z_n = \#\GWT_n$ is the Galton-Watson process associated with the branching random walk $\BRW(G,\pi,\mu)$ and $M_n(x) = \sum_{v\in\GWT_n}\mathds{1}_{\lbrace X_v = x\rbrace}$.

\textbf{Assumptions}
We introduce here the assumptions on both the underlying random walk and the Galton–Watson process under which our results hold. We consider only supercritical branching processes, that is, $\rho>1$ during this paper. 
\begin{enumerate}[label = (A\arabic*)]
\setlength\itemsep{0em}
    \item\label{ass: finite entropy} The step distribution $\mu$ has finite entropy, i.e. $-\sum_{x\in G}\mu(x)\log\mu(x)<\infty$.
    \item\label{ass: almost sure survival} Particles always have a positive number of offspring, i.e. $\pi(0)=0$.
    \item\label{ass: sec mom GW} The offspring distribution $\pi$ has finite second moment, i.e.~$\sum_{k=1}^\infty \pi(k)k^2<\infty$. 
\end{enumerate}
Note that under assumptions \ref{ass: almost sure survival} and \ref{ass: sec mom GW} the mean offspring number $\rho=\E\pi$ is finite and the population martingale $W_n = Z_n/\rho^n$ has an almost sure positive and finite limit $W$ in view of \cite{Kesten-Stigum}.

\section{Auxiliary results}\label{sec: gen_results}
We first establish several auxiliary results needed later, and we begin with a general statement about the Shannon entropy.

\begin{lemma}\label{lem: Lemma on entropy}
Let $\mu,\nu,\mu_1,\mu_2,\dots,\mu_n$ be probability measures on $G$, where $n\in\N$. The following statements hold\rk{:} 
\begin{enumerate}[label = (\roman*)]
    \item Let $\lambda_1,\lambda_2,\dots,\lambda_n\geq 0$ with $\lambda_1+\lambda_2+\cdots+\lambda_n=1$. Then it holds
    $$
    \sum_{k=1}^n\lambda_k H(\mu_k)\leq H\Big(\sum_{k=1}^n\lambda_k\mu_k\Big)\leq \sum_{k=1}^n\lambda_k H(\mu_k) + \log n.
    $$
    \item Let $c\in(0,1)$. If for all $x\in\N$ we have $\mu(x)\leq c$, then $H(\mu)\geq -\log c$.
    \item  Let $c\in (0,1)$. If for all $x\in\textsf{supp}(\mu)$ we have $\mu(x)\geq c$, then $ H(\mu)\leq -\log c.$
    \item Let $0\leq c<1<C\leq \infty$, and $c\nu\leq \mu \leq C\nu$ pointwise. Then it holds
    $$
    cH(\nu)-\log C\leq H(\mu) \leq CH(\nu)-\log c.
    $$
\end{enumerate}
\begin{proof}
We start with $(i)$. The first inequality is due to the concavity of $x\mapsto -x\log x$. For the second inequality we have 
\begin{align*}
H\Big(\sum_{k=1}^n &\lambda_k\mu_k\Big)  = -\sum_{x\in G}\Big(\sum_{k=1}^n\lambda_k\mu_k(x)\Big)\log\Big(\sum_{j=1}^n\lambda_j\mu_j(x)\Big)
=-\sum_{k=1}^n\lambda_k\sum_{x\in G}\mu_k(x)\log\Big(\sum_{j=1}^n\lambda_j\mu_j(x)\Big)\\
&\leq -\sum_{k=1}^n\lambda_k\sum_{x\in G}\mu_k(x)\log(\lambda_k\mu_k(x))
= \sum_{k=1}^n\lambda_kH(\mu_k) - \sum_{k=1}^n\lambda_k\log\lambda_k
\leq \sum_{k=1}^n\lambda_k H(\mu_k) + \log n.
\end{align*}
Above we have used again the concavity of $-x\log x$ to conclude that $-\sum_{k=1}^n\lambda_k\log\lambda_k\leq\log n$. Since  $-\log$ is decreasing, $(ii)$ and $(iii)$ follow directly. The last claim $(iv)$ is again a simple calculation. For the first inequality we have
\begin{align*}
H(\mu)=-\sum_{x\in G}\mu(x)\log\mu(x) \geq -\sum_{x\in G}\mu(x) \log(C\nu(x)) \geq -\log C + \sum_{x\in G}c\nu(x)(-\log\nu(x)).
\end{align*}
 The second inequality works exactly the same way, by switching the roles of $c$ and $C$.
\end{proof}
\end{lemma}

We show  next that the expectation of the empirical distribution equals the  $n$-step  transition probability of the underlying random walk $(G,\mu)$.
\begin{lemma}\label{lem: expectation of empirical}
For every $n\in\N$ it holds
$\E\big[\empirical_n\big] = \mu^{*n}$.
\end{lemma}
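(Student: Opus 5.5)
We prove $\E[\empirical_n(x)] = \mu^{*n}(x)$ for every $x\in G$ by conditioning on the genealogical tree. The key structural fact is that in $\BRW(G,\pi,\mu)$ the displacements $(\zeta_v)_{v\neq\emptyset}$ are i.i.d.\ with law $\mu$ and independent of the Galton--Watson tree $\GWT$. Let $\mathcal{F}$ be the $\sigma$-algebra generated by $\GWT$; in particular $Z_n$ and the set $\GWT_n$ are $\mathcal{F}$-measurable.

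Fix $v\in\GWT_n$. Its position $X_v=\zeta_{v_1}\cdots\zeta_{v_n}$ (with $\zeta_\emptyset=e$) is a product of $n$ i.i.d.\ $\mu$-distributed increments, and these are independent of $\mathcal{F}$. Hence, conditionally on $\mathcal{F}$, $X_v$ has law $\mu^{*n}$, so $\Prob(X_v=x\mid\mathcal{F})=\mu^{*n}(x)$. Under \ref{ass: almost sure survival} we have $Z_n\ge 1$ almost surely, so $\empirical_n$ is well defined.

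Linearity of conditional expectation over the $\mathcal{F}$-measurable finite index set $\GWT_n$ then gives
\[
\E\big[\empirical_n(x)\mid\mathcal{F}\big]=\frac{1}{Z_n}\sum_{v\in\GWT_n}\Prob(X_v=x\mid\mathcal{F})=\frac{1}{Z_n}\,Z_n\,\mu^{*n}(x)=\mu^{*n}(x).
\]
Taking expectations yields $\E[\empirical_n(x)]=\mu^{*n}(x)$ for every $x\in G$, which is the claim.

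The only point requiring care is to condition on the whole tree rather than to expand $M_n(x)$ and $Z_n$ separately, because $\empirical_n=M_n/Z_n$ is a ratio with a random denominator. Conditioning on $\mathcal{F}$ makes the denominator a constant. No integrability issue arises, since $0\le\empirical_n(x)\le1$.
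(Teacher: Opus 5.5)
Your proof is correct and follows the paper's own argument: you condition on the Galton--Watson tree $\GWT$, use that each $X_v$ with $v\in\GWT_n$ has conditional law $\mu^{*n}$ because the displacements are independent of the tree, obtain $\E[\empirical_n(x)\mid\GWT]=\mu^{*n}(x)$, and then take expectations. The paper phrases this with sets $A\subset G$ rather than points $x$; your added remark that $Z_n\ge 1$ under \ref{ass: almost sure survival} is a harmless extra detail.
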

\begin{proof}
    Choosing $A\subset G$ and conditioning on $\GWT$ we obtain
    \begin{align*}
    \E\big[\empirical_n(A)\,|\,\GWT\big] &= \E\Big[\frac{1}{Z_n}\sum_{v\in \GWT_n} \mathds1_{ \{X_v\in A\}}\,|\,\GWT\Big] = \frac{1}{Z_n}\sum_{v\in \GWT_n}\E\big[\mathds1_{ \{X_v\in A\}}\,|\,\GWT\big] \\
    &=\frac{1}{Z_n}\sum_{v\in\GWT_n}\Prob(Y_n\in A) = \mu^{*n}(A).
    \end{align*}
    Here we used that the random walk steps are independent of the Galton–Watson process. Taking the expectation yields the result.
\end{proof}
We apply this lemma to prove that the expected entropy of the empirical distribution is always bounded above by the entropy of the random walk’s transition probability.
\begin{lemma}\label{lem: expectation of entropy}
For every $n\in\N$ we have
$\E\big[H(\empirical_n)\big]\leq H(\mu^{*n})$.
\end{lemma}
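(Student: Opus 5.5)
The plan is to combine concavity of the Shannon entropy with Lemma~\ref{lem: expectation of empirical}. The entropy functional $\nu\mapsto H(\nu)=\sum_{x\in G}\phi(\nu(x))$, with $\phi(t)=-t\log t$, is a sum of concave functions of the coordinates. Jensen's inequality should therefore let us move the expectation inside the entropy, giving $\E[H(\empirical_n)]\le H(\E[\empirical_n])=H(\mu^{*n})$.

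Concretely, we first use that each summand is nonnegative: $\phi\ge 0$ on $[0,1]$, and $\empirical_n(x)\in[0,1]$. Tonelli's theorem then gives
\[
\E\big[H(\empirical_n)\big]=\sum_{x\in G}\E\big[\phi(\empirical_n(x))\big],
\]
with no integrability issue, even if the right-hand side were infinite. Next, for each fixed $x\in G$, the random variable $\empirical_n(x)$ is bounded in $[0,1]$ and $\phi$ is concave and continuous on $[0,1]$. Jensen's inequality thus yields $\E[\phi(\empirical_n(x))]\le\phi(\E[\empirical_n(x)])$.

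By Lemma~\ref{lem: expectation of empirical}, $\E[\empirical_n(x)]=\mu^{*n}(x)$. Summing over $x$ gives
\[
\E\big[H(\empirical_n)\big]\le\sum_{x\in G}\phi(\mu^{*n}(x))=H(\mu^{*n}),
\]
which is finite under \ref{ass: finite entropy}, since $H(\mu^{*n})\le nH(\mu)$ by subadditivity.

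There is no real obstacle here. The only point requiring care is justifying the interchange of expectation and the (possibly infinite) sum over the countable group $G$, and nonnegativity of $\phi$ handles this via Tonelli. Alternatively, one could condition on $\GWT$ first, exactly as in the proof of Lemma~\ref{lem: expectation of empirical}, apply Jensen conditionally, and then take expectations, but the direct argument above already suffices.
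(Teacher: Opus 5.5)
Your proposal is correct and follows the paper's proof essentially verbatim: you exchange the sum over $G$ with the expectation, apply Jensen's inequality sitewise to the concave function $t\mapsto -t\log t$, and conclude with Lemma~\ref{lem: expectation of empirical}. Your Tonelli justification for the interchange, using nonnegativity of the summands, is a small point of rigor that the paper leaves implicit.
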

\begin{proof}
The function $[0,1]\to [0,\infty):x\mapsto -x\log x$
is concave, so due to Jensen's inequality
\begin{align*}
    \E\big[H(\empirical_n)\big] &= \sum_{x\in G} \E\big[-\empirical_n(x)\log\empirical_n(x)\big] \leq \sum_{x\in G} -\E\big[\empirical_n(x)\big]\log\E\big[\empirical_n(x)\big]=H(\E\big[\empirical_n\big])=H(\mu^{*n}),
\end{align*}
where the last equality uses Lemma \ref{lem: expectation of empirical}.
\end{proof}
Lastly, we show that the asymptotic entropy of the branching random walk can never exceed $\log\rho.$
\begin{lemma}\label{lem: trivial upper}
Under assumptions \ref{ass: almost sure survival} and \ref{ass: sec mom GW} it holds
$$
\limsup_{n\to\infty}\frac{H(\empirical_n)}{n}\leq \lim_{n\to\infty}\frac{\log Z_n}{n} = \log\rho
$$
almost surely.
\end{lemma}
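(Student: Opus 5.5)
The inequality will follow from a deterministic bound on the entropy of any empirical measure, combined with the Kesten–Stigum limit. Since $\empirical_n$ is supported on the set $\{X_v : v\in\GWT_n\}$, which has at most $Z_n$ points, we have $\empirical_n(x)\ge 1/Z_n$ for every $x$ in its support. By Lemma~\ref{lem: Lemma on entropy}(iii) with $c=1/Z_n$ (or, alternatively, because a probability measure on at most $Z_n$ atoms has entropy at most $\log Z_n$), we get
$$H(\empirical_n)\le \log Z_n$$
pathwise, for every $n$. The case $Z_n=1$, where $c=1$ falls outside $(0,1)$, is trivial: then $H(\empirical_n)=0=\log Z_n$. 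Dividing by $n$ and taking $\limsup$ gives the first inequality, provided the limit of $\log Z_n/n$ exists.

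For the identification of that limit, assumptions \ref{ass: almost sure survival} and \ref{ass: sec mom GW} imply the $L\log L$ condition, since a finite second moment implies $\sum_k \pi(k) k\log k<\infty$. Moreover, $\pi(0)=0$ gives almost sure survival. By Kesten–Stigum \cite{Kesten-Stigum}, $W_n=Z_n/\rho^n\to W\in(0,\infty)$ almost surely. Hence
$$\frac{\log Z_n}{n}=\log\rho+\frac{\log W_n}{n}\to\log\rho$$
almost surely, because $\log W_n$ converges to the finite value $\log W$.

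There is no real obstacle. The only points needing care are checking that $W>0$ almost surely, which holds because the survival event has probability one under $\pi(0)=0$, and treating the degenerate case $Z_n=1$ as above.
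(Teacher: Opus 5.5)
Your proposal is correct and follows the paper's proof: the pathwise bound $H(\empirical_n)\le\log Z_n$ because the support of $\empirical_n$ has at most $Z_n$ points, combined with Kesten--Stigum, which gives $W_n=Z_n/\rho^n\to W\in(0,\infty)$ almost surely (using $\pi(0)=0$, finite second moment, and the standing assumption $\rho>1$), so that $\log Z_n/n\to\log\rho$. Your extra care with the case $Z_n=1$ and with the implication from a finite second moment to the $L\log L$ condition only makes explicit what the paper leaves implicit.
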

\begin{proof}
The first inequality is trivial since the Shannon entropy is maximized by the uniform distribution on the support and the size of the support of $\empirical_n$ is less or equal than the total number $Z_n$ of particles. Assumptions \ref{ass: almost sure survival} and \ref{ass: sec mom GW} imply that the limit $W$ of the population martingale $W_n = Z_n/\rho^n$ exists almost surely and is positive with probability $1$. Therefore
$$
\lim_{n\to\infty}\frac{\log Z_n}{n} = \lim_{n\to\infty}\frac{n\log\rho+\log W_n}{n} = \log \rho,
$$
and this proves the claim.
\end{proof}

\section{Independent random walks}\label{sec:ind-rw}


In this section we prove Theorem \ref{thm: main 2}. We partition the collection of $\rho^n$ particles (i.e.,  $\rho^n$ independent random walks starting at the identity of $G$) into typical and atypical ones, where “typical” refers to their entropic behaviour.  We show that for $\rho \le \e^{\avez}$ the particles do not cluster too much, whereas for $\rho > \e^{\avez}$ the asymptotic entropy of the system of typical particles resembles the entropy of a single random walk. We emphasize that, in the independent-particle setting, we actually obtain a stronger result about the empirical distributions of typical particles. For $\rho \le \e^{\avez}$, we prove an $L^\infty$ bound on clustering, showing that the cluster size grows slower than any exponential. For $\rho > \e^{\avez}$, we show that the empirical distribution of typical particles approximates the $n$-step distribution $\mu^{*n}$ arbitrarily well in the $L^\infty$ sense. These statements are collected in Theorem \ref{thm: main ind empirical} and yield convergence of the asymptotic entropy in the independent case. We believe such $L^\infty$ control of empirical distributions is of independent interest.

\subsection{Typical and odd particles}
For fixed $\delta>0$ and $n\in\N$ define the set of typical particles
$$
\typind_n:=\typind_{n,\delta}:=\big\{k\in\{1,2,\dots,\rho^n\}: |-\log\mu^{*n}(X_{n,k})-n\avez|\leq \delta n\big\},
$$
and the set of odd particles as 
$$
\oddind_n:=\oddind_{n,\delta} := \{1,2,\dots,\rho^n\}\setminus \typind_{n,\delta}.
$$
These two sets induce a natural partition of the empirical distribution $\indempirical_n$ into the empirical distribution of the typical and of the odd particles respectively, thus 
$$
\indempirical_n = \frac{\#\typind_n}{\rho^n}\indempirical_n^{\text{typ}}+\frac{\#\oddind_n}{\rho^n}\indempirical_n^{\text{odd}},
$$
where we set 
\begin{align*}
\indempirical_n^{\text{typ}}(x) = \frac{1}{\#\typind_n}\sum_{k\in \typind_n}\mathds1_{\{X_{n,k}=x\}}
\hspace{1cm}\text{and}\hspace{1cm}
\indempirical_n^{\text{odd}}(x) = \frac{1}{\#\oddind_n}\sum_{k\in \oddind_n}\mathds1_{\{X_{n,k}=x\}}.
\end{align*}
We write $\mathcal{M}_n^{\text{typ}}(x) = \#\typind_n\cdot\indempirical_n^{\text{typ}}(x)$ as well as $\mathcal{M}_n^{\text{odd}}(x) = \#\oddind_n \cdot\indempirical_n^{\text{odd}}(x)$ for the number of typical and odd particles at site $x\in G$ in generation $n$, respectively.

\begin{lemma}\label{lem: ind RW untypical goes to 0}
 Under Assumption \ref{ass: finite entropy}, it holds
 $$
 \lim_{n\to\infty}\frac{\#\typind_n}{\rho^n} = 1,\qquad \text{almost surely}.
 $$
 In particular, it holds
  $$
 \lim_{n\to\infty}\frac{H\big(\indempirical_n^{\text{typ}} \big)}{H\big(\indempirical_n \big)} = 1,\qquad \text{almost surely}.
 $$
\end{lemma}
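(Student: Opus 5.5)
The first claim is a strong law for triangular arrays. For fixed $n$, the indicators $\mathds1_{\{k\in\typind_n\}}$, $k=1,\dots,\rho^n$, are i.i.d.\ Bernoulli with parameter
$$p_n:=\Prob\big(|-\log\mu^{*n}(Y_n)-n\avez|\le\delta n\big).$$
The Shannon type theorem \cite[Theorem 2.1]{KV83} gives $-\log\mu^{*n}(Y_n)/n\to\avez$ almost surely, hence in probability, so $p_n\to1$.

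It then suffices to show that $\#\typind_n/\rho^n-p_n\to0$ almost surely, via Borel--Cantelli. The variance of $\#\typind_n/\rho^n$ is $p_n(1-p_n)/\rho^n\le\rho^{-n}$. Chebyshev therefore gives
$$\Prob\big(|\#\typind_n/\rho^n-p_n|>\varepsilon\big)\le\varepsilon^{-2}\rho^{-n},$$
which is summable because $\rho>1$. Alternatively, Hoeffding gives the bound $2\exp(-2\varepsilon^2\rho^n)$. No independence across different $n$ is needed, since Borel--Cantelli works for arbitrary couplings of the rows. (If $\rho^n$ is not an integer, replace it by $\lfloor\rho^n\rfloor$; nothing changes.) This proves the first claim.

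For the ratio of entropies, use the decomposition
$$\indempirical_n=\lambda_n\indempirical_n^{\text{typ}}+(1-\lambda_n)\indempirical_n^{\text{odd}},\qquad \lambda_n=\#\typind_n/\rho^n.$$
Lemma~\ref{lem: Lemma on entropy}(i) with two measures gives
$$\lambda_nH(\indempirical_n^{\text{typ}})+(1-\lambda_n)H(\indempirical_n^{\text{odd}})\le H(\indempirical_n)\le \lambda_nH(\indempirical_n^{\text{typ}})+(1-\lambda_n)H(\indempirical_n^{\text{odd}})+\log 2.$$
The entropy of any empirical measure of at most $\rho^n$ points is bounded by $n\log\rho$, since it is supported on at most $\rho^n$ points. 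Hence $(1-\lambda_n)H(\indempirical_n^{\text{odd}})\le(1-\lambda_n)n\log\rho=o(n)$ almost surely. Combining,
$$H(\indempirical_n)=\lambda_nH(\indempirical_n^{\text{typ}})+o(n),$$
and since $\lambda_n\to1$ we get $H(\indempirical_n)-H(\indempirical_n^{\text{typ}})=o(n)$ almost surely.

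The main obstacle is converting this additive $o(n)$ statement into the ratio statement: the ratio tends to $1$ only if $H(\indempirical_n)$ grows at least linearly. Here the typical definition helps. Each typical particle sits at a point $x$ with $\mu^{*n}(x)\le e^{-n(\avez-\delta)}$, so the typical particles are spread over sites of small $\mu^{*n}$-mass. To get a linear lower bound, I would first try Lemma~\ref{lem: Lemma on entropy}(ii) by bounding $\max_x\indempirical_n^{\text{typ}}(x)$ exponentially. A Chernoff bound on the count at a site $x$ with $\mu^{*n}(x)\le e^{-n(\avez-\delta)}$, union-bounded over at most $\rho^n$ occupied sites, should show $\max_x\indempirical^{\text{typ}}_n(x)\le e^{-cn}$ with $c>0$ whenever $\avez>\delta$ and $\rho>1$. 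That gives $H\ge cn$, and with it the ratio claim.

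If $\avez=0$, the ratio statement should be read in the additive form $H(\indempirical_n)-H(\indempirical_n^{\text{typ}})=o(n)$, which is what is actually used for the asymptotic entropy. I would state the lemma's second part in that form if the linear lower bound is unavailable.
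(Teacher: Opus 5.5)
Your proof is correct and has the same skeleton as the paper's: concentration plus Borel--Cantelli for the proportion of typical particles, then the two-sided bound from Lemma~\ref{lem: Lemma on entropy}(i) together with $H(\indempirical_n^{\text{odd}})\le n\log\rho$.

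\textbf{First part.} The paper applies a Chernoff bound directly to $\{\#\oddind_n\ge\varepsilon\rho^n\}$ and gets decay $a^{\rho^n}$. Your Chebyshev bound around $p_n$ only gives $\rho^{-n}$. Only summability is needed, so both suffice, and yours is the more elementary.

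\textbf{Second part.} Here your treatment differs from, and improves on, the paper's. The paper writes down the sandwich inequality~\eqref{eq: lower upper indempirical} and declares the ratio statement. You correctly note that this sandwich only gives $H(\indempirical_n)-H(\indempirical_n^{\text{typ}})=o(n)$. That additive form is exactly what the paper actually uses in the proof of Theorem~\ref{thm: main 2}. Passing to the ratio needs linear growth of $H(\indempirical_n)$, which the paper never establishes. Your Chernoff bound on the maximal typical occupancy supplies it. For $0<c<\min(\avez-\delta,\log\rho)$, the probability that some typical site carries more than $\rho^n\e^{-cn}$ particles is super-exponentially small. This is essentially the computation in the proof of Theorem~\ref{thm: main ind empirical}.

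Two small repairs are needed.
\begin{enumerate}
\item Do the union bound over the deterministic finite set $\mathcal{A}_n=\{x:\ |-\log\mu^{*n}(x)-n\avez|\le\delta n\}$, or over particle indices, rather than over the random set of occupied sites. The set $\mathcal{A}_n$ contains every typical position and has at most $\e^{n(\avez+\delta)}$ elements.
\item When $0<\avez\le\delta$, your bound as stated does not apply. Run it with some $\delta'<\avez$ instead. Since $\typind_{n,\delta'}\subset\typind_{n,\delta}$, concavity gives $H(\indempirical_n)\ge \frac{\#\typind_{n,\delta'}}{\rho^n}H(\nu_n')$, where $\nu_n'$ is the empirical measure of the $\delta'$-typical particles. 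Hence $H(\indempirical_n)$ grows linearly whenever $\avez>0$.
\end{enumerate}

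Your caveat for $\avez=0$ is justified. For $\mu=\delta_g$ all particles sit at $g^n$, so $H(\indempirical_n)=H(\indempirical_n^{\text{typ}})=0$ and the ratio is undefined. Only the additive form holds in that generality.
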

\begin{proof} 
We first show that the ratio $\#\typind_n/\rho^n$ converges to $1$  almost surely, as $n$ goes to infinity.
For this we prove that $\lim \#\oddind_n/\rho^n = 0$, by showing that for every $\varepsilon>0$ there exist $n_0\in\N$ and  $a<1$ such that 
$$
\Prob(\#\oddind_n\geq \rho^n\varepsilon)\leq a^{\rho^n}
$$
for every $n\geq n_0$. Then the claim follows from the Borel-Cantelli lemma. Set
\[
B_n = \{|-\log\mu^{*n}(Y_n)-n\avez|>\delta n\}\,.
\]
Then due to Shannon's theorem \cite[Theorem 2.1]{KV83}, we get that $\lim_n\Prob(B_n)=0$, which together with the Chernoff bound yields for $t>0$ that
$$
\Prob\big(\#\oddind_n\geq \rho^n\varepsilon\big) \leq \bigg(\frac{\E\big[\e^{t\mathds1_{B_n}}\big]}{\e^{t\varepsilon}}\bigg)^{\rho^n} = \bigg(\frac{1-\Prob(B_n)+\e^{t}\Prob(B_n)}{\e^{t\varepsilon}}\bigg)^{\rho^n}.
$$
We choose $n_0$, such that $\Prob(B_n) < \varepsilon/2$ for any $n\geq n_0$. Then linearization near $0$ gives the existence of a $t>0$ such that 
$$
1-\frac{\varepsilon}{2}+\e^t\frac{\varepsilon}{2} < \e^{t\varepsilon},
$$
which combined with the previous inequality completes the proof.

The second statement of the lemma follows from Lemma \ref{lem: Lemma on entropy} which yields
\begin{equation}\label{eq: lower upper indempirical}
     \begin{aligned}
     \frac{\#\typind_n}{\rho^n}H\big(\indempirical_n^{\text{typ}}\big)&\leq H\big(\indempirical_n\big)\leq
      \frac{\#\typind_n}{\rho^n}H\big(\indempirical_n^{\text{typ}}\big)+\frac{\#\oddind_n}{\rho^n}H\big(\indempirical_n^{\text{odd}}\big)+\log 2\\
     &\leq \frac{\#\typind_n}{\rho^n}H\big(\indempirical_n^{\text{typ}}\big)+\frac{n\log\rho\cdot \#\oddind_n}{\rho^n}+\log 2.
     \end{aligned}
\end{equation}
\end{proof}

Due to Lemma \ref{lem: ind RW untypical goes to 0}, to prove Theorem \ref{thm: main 2} it is enough to prove the following result.

\begin{theorem}\label{thm: main ind empirical}
Under assumption \ref{ass: finite entropy}, the two following statements hold. 
\begin{enumerate}[label = (\roman*)]
    \item Let $\rho \leq \e^{\avez}$. Then, for any $c>0$, there exists $\delta_0>0$, such that for all $\delta\leq\delta_0$ it holds that 
    $$
    \Vert\mathcal{M}_n^{\text{typ}}\Vert_\infty \leq (1+c)^n
    $$ 
   almost surely for  all sufficiently large $n\in\N$.
    \item Let $\rho>\e^{\avez}$. Then, for any $0\leq c < 1 < C \leq \infty$ there exists $\delta_0>0$, such that for all $\delta\leq \delta_0$ it holds that 
    $$
    \indempirical_n^{\text{typ}}(x)\leq C\mu^{*n}(x), \hspace{1cm}x\in G,
    $$
    and that 
    $$
    \indempirical_n^{\text{typ}}(x)\geq c\mu^{*n}(x),\hspace{1cm}x\in\textsf{supp}(\indempirical_n^{typ}),
    $$
    almost surely for  all sufficiently large $n\in\N$. 
\end{enumerate}
\end{theorem}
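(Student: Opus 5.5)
The plan is to work site by site on the \emph{entropic shell}
$$
S_n:=S_{n,\delta}:=\{x\in G:\ |-\log\mu^{*n}(x)-n\avez|\leq\delta n\},
$$
since by definition a typical particle $X_{n,k}$ lies in $S_n$. Hence $\mathcal{M}_n^{\text{typ}}(x)=\mathcal{M}_n(x):=\sum_k\mathds1_{\{X_{n,k}=x\}}$ for $x\in S_n$, and $\mathcal{M}_n^{\text{typ}}(x)=0$ for $x\notin S_n$. For a fixed $x\in S_n$, the count $\mathcal{M}_n(x)$ is $\mathrm{Bin}(\rho^n,\mu^{*n}(x))$, where $\rho^n$ is read as $\lfloor\rho^n\rfloor$ throughout. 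Its mean $m_n(x)=\rho^n\mu^{*n}(x)$ lies in $[(\rho\e^{-\avez-\delta})^n,(\rho\e^{-\avez+\delta})^n]$. The shell is small: since $\mu^{*n}(x)\geq\e^{-n(\avez+\delta)}$ on $S_n$ and $\mu^{*n}$ has total mass one, $\#S_n\leq\e^{n(\avez+\delta)}$. So we get one binomial tail bound per site, take a union bound over at most exponentially many sites, and conclude by Borel--Cantelli. This works because the tail bounds will be super-exponentially small in $n$.

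For (i), with $\rho\leq\e^{\avez}$, every $x\in S_n$ has $m_n(x)\leq\e^{\delta n}$. We use the elementary binomial tail $\Prob(\mathrm{Bin}(N,p)\geq k)\leq\binom Nk p^k\leq(\e Np/k)^k$ with $k=\lceil(1+c)^n\rceil$. This gives
$$
\Prob\big(\mathcal{M}_n(x)\geq(1+c)^n\big)\leq\Big(\frac{\e\cdot\e^{\delta n}}{(1+c)^n}\Big)^{(1+c)^n}.
$$
Choosing $\delta_0$ with $\e^{2\delta_0}<1+c$, the base is at most $\e^{1-\delta n}$ for $\delta\le\delta_0$. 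The bound is therefore of order $\exp(-\delta n(1+c)^n/2)$ for large $n$. Multiplying by $\#S_n\leq\e^{n(\avez+\delta)}$ still leaves a summable sequence, so Borel--Cantelli gives $\Vert\mathcal{M}_n^{\text{typ}}\Vert_\infty<(1+c)^n$ eventually, almost surely.

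For (ii), with $\rho>\e^{\avez}$, choose $\delta_0$ so small that $\rho\e^{-\avez-\delta_0}=:\lambda>1$. Then $m_n(x)\geq\lambda^n$ uniformly on $S_n$. Fix $\eta>0$. The multiplicative Chernoff bound gives
$$
\Prob\big(|\mathcal{M}_n(x)-m_n(x)|>\eta m_n(x)\big)\leq2\exp(-\eta^2\lambda^n/3),
$$
which is doubly exponentially small. A union bound over $S_n$ and Borel--Cantelli then show that, almost surely for all large $n$, every $x\in S_n$ satisfies $(1-\eta)\rho^n\mu^{*n}(x)\leq\mathcal{M}_n^{\text{typ}}(x)\leq(1+\eta)\rho^n\mu^{*n}(x)$. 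Now divide by $\#\typind_n$. By Lemma \ref{lem: ind RW untypical goes to 0}, $\#\typind_n/\rho^n\to1$ almost surely, so eventually
$$
\frac{1-\eta}{1+\eta}\,\mu^{*n}(x)\leq\indempirical_n^{\text{typ}}(x)\leq\frac{1+\eta}{1-\eta}\,\mu^{*n}(x),\qquad x\in S_n.
$$
Choosing $\eta$ so that these constants lie in $[c,C]$ yields both inequalities. The upper bound also holds trivially off $S_n$, where $\indempirical_n^{\text{typ}}$ vanishes, and the support of $\indempirical_n^{\text{typ}}$ is contained in $S_n$. One remark: Lemma \ref{lem: ind RW untypical goes to 0} is applied for the fixed $\delta$, which is legitimate since its proof works for every $\delta>0$.

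I do not expect a serious obstacle. The step needing the most care is making the union bound legitimate, that is, the counting estimate $\#S_n\leq\e^{n(\avez+\delta)}$ together with the requirement that the per-site tail decays faster than $\e^{-n(\avez+\delta)}$ summably. In (i) this forces the choice $\e^{2\delta}<1+c$. In (ii) it forces $\delta$ small enough that the means grow exponentially, which is exactly where the threshold $\rho=\e^{\avez}$ enters.
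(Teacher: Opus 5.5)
Your proposal is correct and is essentially the paper's proof. Both restrict to the entropic shell, bound the binomial count at each site, union-bound over at most $\e^{n(\avez+\delta)}$ sites, apply Borel--Cantelli, and in (ii) normalize by $\#\typind_n$ via Lemma \ref{lem: ind RW untypical goes to 0}. The only differences are in the tail estimates: you use $\binom{N}{k}p^k\le(\e Np/k)^k$ in (i) and the standard multiplicative Chernoff bound in (ii), where the paper optimizes exponential Markov bounds by hand. Your version of (ii) also states the correct condition $\rho\,\e^{-\avez-\delta}>1$, where the paper's text has sign slips, and it does not need $\mu^{*n}(x)$ to be small, which the paper's linearization of the lower tail does.
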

Before we prove the theorem, we show that Theorem \ref{thm: main 2} is a simple consequence of Theorem \ref{thm: main ind empirical}.

\begin{proof}[Proof of Theorem \ref{thm: main 2}]
Using Equation \ref{eq: lower upper indempirical} and Lemma \ref{lem: ind RW untypical goes to 0}, it suffices to prove the two claims in Theorem \ref{thm: main 2} for $\indempirical_n^{\text{typ}}$. First let $\rho\leq \e^{\avez}$. Then, by Theorem \ref{thm: main ind empirical} and Lemma \ref{lem: Lemma on entropy}, for every $c>0$ there exists some $\delta>0$ such that we have, almost surely, for all sufficiently large $n\in\N$, 
$$
n\log\rho+\log\Big(\frac{\#\typind_n}{\rho^n}\Big)-n\log(1+c)=\log\big(\#\typind_n\big)-n\log(1+c) \leq H(\indempirical_n^{\text{typ}}) \leq n\log\rho.
$$
Since we can choose $c>0$ arbitrarily, and due to Lemma \ref{lem: ind RW untypical goes to 0}, the small-$\rho$ regime is proven. Next, let $\rho>\e^{\avez}$. Then if we define $\mathcal{A}_n:=\big\{x\in G:\,n(\avez-\delta)\leq -\log\mu^{*n}(x)\leq n(\avez+\delta)\big\}$ it holds
$\textsf{supp}(\indempirical_n^{\text{typ}})\subset \mathcal{A}_n$, and therefore
$$
H(\indempirical_n^{\text{typ}})\leq \log(\#\mathcal{A}_n).
$$
Moreover, 
$$
\e^{n(\avez+\delta)} \geq \sum_{x\in\mathcal{A}_n}\e^{n(\avez+\delta)}\mu^{*n}(x) \geq \#\mathcal{A}_n,
$$
and so $H(\indempirical_n^{\text{typ}})\leq n(\avez+\delta)$, which finishes the upper bound. For the lower bound we get 
$$
-\sum_{x\in G}\indempirical_n^{\text{typ}}(x)\log\indempirical_n^{\text{typ}}(x) \geq C(\avez-\delta)\sum_{x\in G}\indempirical_n^{\text{typ}}(x) = C(\avez-\delta),
$$
and since $C>1$ can be chosen arbitrarily close to $1$, the proof is finished.
\end{proof}
We now split the proof of Theorem \ref{thm: main ind empirical} into two parts, the small-$\rho$ and the large-$\rho$ regime.

\subsection{Small-$\rho$ regime}
\begin{proof}[Proof of Theorem \ref{thm: main ind empirical}{(i)}]
Recall the definition of  $\mathcal{M}_n^{\text{typ}}$, which counts the number of typical particles at time $n$. For some $1>\delta>0$, define the set 
\[
\mathcal{A}_n:=\mathcal{A}_{n,\delta}:=\big\{x\in G:\,n(\avez-\delta)\leq -\log\mu^{*n}(x)\leq n(\avez+\delta)\big\}\,.
\]
Note that for all $x \in \mathcal{A}_n$, we have $\mu^{*n}(x) \ge e^{-n(\avez+1)}$, so 
\begin{align}\label{cardAn}
\#\mathcal{A}_n \le e^{n(\avez+1)}.
\end{align}
The definition of $\typind_n$ implies that $\textsf{supp}(\mathcal{M}_n^{\text{typ}}) \subset \mathcal{A}_n$, which together with a union bound implies that 
\begin{align*}
\Prob\big(\text{there exists } x\in G \text{ such that } \mathcal{M}_n^{\text{typ}}(x)\geq (1+c)^n\big) &\leq \sum_{x\in  \mathcal{A}_n}\Prob\big(\mathcal{M}_n^{\text{typ}}(x)\geq (1+c)^n\big)\\
&\leq \sum_{x\in \mathcal{A}_n}\Prob\big(\mathcal{M}_n(x)\geq (1+c)^n\big).
\end{align*}
We bound the above probabilities uniformly in $x$. The Markov inequality yields
$$
\Prob\big(\mathcal{M}_n(x)\geq (1+c)^n\big) \leq \frac{\E\big[2^{\mathcal{M}_n(x)}\big]}{2^{(1+c)^n}},
$$
and the expectation on the right-hand side can be computed explicitly, since $X_{n,1},X_{n,2},\dots,X_{n,\rho^n}$ are i.i.d. random walks distributed as $Y_n$. We have
\begin{align*}
    \E\big[2^{\mathcal{M}_n(x)}\big] &= \E\Big[2^{\sum_{k=1}^{\rho^n}\mathds{1}_{\{X_{n,k} = x\}}}\Big] = \E\Big[\prod_{k=1}^{\rho^n}2^{\mathds{1}_{\{X_{n,k}=x\}}}\Big] 
    = \prod_{k=1}^{\rho^n}\E\big[2^{\mathds{1}_{\{Y_n = x\}}}\big] \\
    &= \big(1-\mu^{*n}(x)+2\mu^{*n}(x)\big)^{\rho^n}
    \leq \big(1+\e^{-n(\avez-\delta)}\big)^{\rho^n}\leq \e^{(\rho\e^{-(\avez-\delta)})^n}\leq \e^{(\e^{\delta n})},
\end{align*}
due to the fact that for any $x\geq 0$ it holds that $1+x\leq \e^x$, and the fact that in the small-$\rho$ regime we have $\e^{-\avez}\rho\leq 1$. After choosing $\delta_0 < \log(1+c)$, we obtain for any $\delta\leq \delta_0$ that 
$$
\sum_{x\in\mathcal{A}_n} \Prob(\mathcal{M}_n(x)\geq (1+c)^n)\leq \#\mathcal{A}_n\frac{\e^{(\e^{\delta_0 n})}}{2^{((1+c)^n)}},
$$
where the right hand side is summable, since 
$$
\lim_{n\to\infty} \frac{1}{n}\log\Big(\frac{\e^{(\e^{\delta_0 n})}}{2^{((1+c)^n)}}\Big) = \lim_{n\to\infty} \frac{\e^{\delta_0 n}}{n} - \frac{\log2\cdot(1+c)^n}{n} = -\infty,
$$
which together with inequality~(\ref{cardAn}) and Borel-Cantelli lemma concludes the proof.
\end{proof}

\subsection{Large-$\rho$ regime}
\begin{proof}[Proof of Theorem \ref{thm: main ind empirical}(ii)]
Assume that $\rho>\e^{\avez}$ and choose $\delta < \log\rho - \avez$. We prove the two inequalities separately and start with the upper bound for $\indempirical_n^{\text{typ}}$. We fix constants $c<1$ and $C>1$, and prove that the probabilities $\Prob(\text{there exists }x\in\textsf{supp}(\mathcal{M}_n^{\text{typ}})\text{ such that }\mathcal{M}_{n}^{\text{typ}}(x)\leq c\rho^n\mu^{*n}(x))$ and $\Prob(\text{there exists }x\in G\text{ such that }\mathcal{M}_{n}^{\text{typ}}(x)\geq C\rho^n\mu^{*n}(x))$ form both summable sequences. This together with the Borel-Cantelli Lemma and Lemma \ref{lem: ind RW untypical goes to 0} completes the proof.

We start with the first sequence of probabilities.  Once again, we write
$$\mathcal{A}_n:=\{x\in G:\,n(\avez-\delta)\leq -\log\mu^{*n}(x)\leq n(\avez+\delta)\},$$ and from the definition of $\typind_n$  it holds $\textsf{supp}(\indempirical_n^{\text{typ}}) \subset \mathcal{A}_n$, and inequality~(\ref{cardAn}) is still valid. Moreover, from the definition of $\typind_n$ and $\mathcal{A}_n$ it holds for any $k\in\{1,2,\dots,\rho^n\}$ that $k$ is typical if and only if $X_{n,k}\in\mathcal{A}_n$, and so it holds for any $x\in\mathcal{A}_n$ that $\mathcal{M}_n(x)=\mathcal{M}_n^{\text{typ}}(x)$. A union bound  implies
\begin{align*}
    \Prob\big(\text{there exists } x\in\mathcal{A}_n \text{ such that } \mathcal{M}_n^{\text{typ}}(x)\leq c\rho^n\mu^{*n}(x)\big) &\leq  \sum_{x\in \mathcal{A}_n}\Prob\big(\mathcal{M}_n^{\text{typ}}(x)\leq c\rho^n\mu^{*n}(x)\big)\\
&= \sum_{x\in \mathcal{A}_n}\Prob\big(\mathcal{M}_n(x)\leq c\rho^n\mu^{*n}(x)\big).
\end{align*}
We now use the Markov inequality for the decreasing function $x\mapsto \e^{-tx}$, for some $t>0$, which will later be chosen in an optimal way. We obtain
$$
\Prob\big(\mathcal{M}_n(x)\leq c\rho^n\mu^{*n}(x)\big) \leq \frac{\E\big[\e^{-t\mathcal{M}_n(x)}\big]}{\e^{-t c\rho^n\mu^{*n}(x)}}.
$$
We again evaluate the expectation in the numerator directly. Independence of the random walks yields
$$
\E\big[\e^{-t\mathcal{M}_n(x)}\big] = \prod_{k=1}^{\rho^n} \E\big[\e^{-t\mathds1_{\{X_{n,k}=x\}}}\big] = (1-\mu^{*n}(x)+\mu^{*n}(x)e^{-t})^{\rho^n},
$$
and so
\begin{equation}\label{eq: ind large rho 1}
\frac{\E\big[\e^{-t\mathcal{M}_n(x)}\big]}{\e^{-tc\rho^n\mu^{*n}(x)}} = \Big((1-\mu^{*n}(x)+\mu^{*n}(x)e^{-t})\e^{tc\mu^{*n}(x)}\Big)^{\rho^n}.
\end{equation}
For fixed $\lambda>0$, the function 
$$
[0,\infty)\to[0,\infty):t\mapsto (1-\lambda+\lambda\e^{-t})\e^{c\lambda t}
$$
attains its minimum at $t_* = \log\big(\frac{c\lambda-1}{c\lambda-c}\big)$, and the corresponding minimum is given by 
$$
\Phi(\lambda):=(1-\lambda+\lambda\e^{-t_*})\e^{c\lambda t_*} = \Big(1-\lambda+\lambda\frac{c\lambda-c}{c\lambda-1}\Big)\Big(\frac{c\lambda-1}{c\lambda-c}\Big)^{c\lambda}.
$$
Computing the derivative at $0$ yields
$$
\Psi(c):=\Phi'(0)= \frac{\left(\frac{c{\lambda} - 1}{c{\lambda} - c}\right)^{c{\lambda}} \, \left(\left(c{\lambda} - c\right) \log\left(\frac{c{\lambda} - 1}{c{\lambda} - c}\right) - c + 1\right)}{c{\lambda} - 1} \Big|_{\lambda=0} = -c\log c +c -1 < 0,
$$
and the inequality on the right hand side holds as long as $c<1$. This is the case since $\Psi$ is concave and takes its maximum at $c=1$ with $\Psi(1)=0$. This implies that 
$$
\Phi(\lambda) = 1+\Psi(c)\lambda+\mathcal{O}(\lambda^2),
$$
and so for $\lambda>0$ sufficiently small there is a $\gamma>0$, such that $\Phi(\lambda) \leq 1-\gamma\lambda$,
which implies that for $n$ sufficiently large it holds
$$
\Prob\big(\mathcal{M}_n(x)\leq c\rho^n\mu^{*n}(x)\big) \leq \big(1-\gamma\mu^{*n}(x)\big)^{\rho^n}\leq \e^{-\gamma(\mu^{*n}(x)\rho^n)} \leq \e^{-\gamma\big(\e^{-\avez+\delta}\rho\big)^n}.
$$
So it holds that
\[
\Prob\big(\text{there exists } x\in \mathcal{A}_n \text{ such that } \mathcal{M}_n^{\text{typ}}(x)\leq c\rho^n\mu^{*n}(x)\big)  \le \#\mathcal{A}_n \e^{-\gamma\big(\e^{-\avez+\delta}\rho\big)^n}.
\]
Choosing $\delta < \avez-\log\rho$, using inequality~(\ref{cardAn}) and applying the Borel-Cantelli lemma finishes the first part of the proof.

For the second part, a union bound  yields
\begin{align*}
    \Prob\big(\text{there exists } x\in G \text{ such that } \mathcal{M}_n^{\text{typ}}(x)\geq C\rho^n\mu^{*n}(x)\big) &\leq \sum_{x\in \mathcal{A}_n}\Prob\big(\mathcal{M}_n^{\text{typ}}(x)\geq C\rho^n\mu^{*n}(x)\big)\\
&\leq \sum_{x\in  \mathcal{A}_n}\Prob\big(\mathcal{M}_n(x)\geq C\rho^n\mu^{*n}(x)\big).
\end{align*}
Using the Markov inequality for the increasing function $x\mapsto \e^{tx}$, for some $t>0$, to be later chosen in an optimal way, we obtain 
$$
\Prob\big(\mathcal{M}_n(x)\geq C\rho^n\mu^{*n}(x)\big) \leq \frac{\E\big[\e^{t\mathcal{M}_n(x)}\big]}{\e^{tC \rho^n\mu^{*n}(x)}}.
$$
The independence of the $\rho^n$ particles gives
$$
\E\big[\e^{t\mathcal{M}_n(x)}\big] = \prod_{k=1}^{\rho^n} \E\big[\e^{t\mathds1_{\{X_{n,k}=x\}}}\big] = (1-\mu^{*n}(x)+\mu^{*n}(x)e^t)^{\rho^n}\,,
$$
thus
$$
\frac{\E\big[\e^{t\mathcal{M}_n(x)}\big]}{\e^{Ct\rho^n\mu^{*n}(x)}} = \Big(\frac{1-\mu^{*n}(x)+\mu^{*n}(x)e^t}{\e^{Ct\mu^{*n}(x)}}\Big)^{\rho^n}\,.
$$
Following the same reasoning as above, the expression inside the parenthesis attains its minimum at $t = \log\Big(\frac{C-C\mu^{*n}(x)}{1-C\mu^{*n}(x)}\Big)$ which is close to $t_*:=\log C$ since 
$\mu^{*n}(x)$ is very small. Plugging $t_*$ into the equation above, we get
$$
\frac{1-\mu^{*n}(x)+\mu^{*n}(x)e^{t_*}}{\e^{Ct_*\mu^{*n}(x)}} = 
\frac{1+\mu^{*n}(x)(C-1)}{C^{C\mu^{*n}(x)}} \leq \Big(\frac{\e^{C-1}}{C^C}\Big)^{\mu^{*n}(x)}\,,
$$
where the last inequality above uses $C-1>0$ and $1+x\leq e^x$ for $x\geq 0$. Since for $C>1$,  $e^{C-1} < C^C$, we obtain
$$
   \Prob\big(\text{there exists } x\in G \text{ such that } \mathcal{M}_n^{\text{typ}}(x)\geq C\rho^n\mu^{*n}(x)\big) \leq \#\mathcal{A}_n\cdot \Big(\frac{\e^{C-1}}{C^C}\Big)^{(\e^{-(\avez+\delta)}\rho)^n}\,,
$$
and since $\delta<\log\rho-\avez$, the sequence is summable, which finishes the proof.
\end{proof}

\section{Branching random walks}\label{sec:brw}

In this section we prove Theorem \ref{thm: main 1}, the main result of this paper. 
In the branching random walk setting, we cannot control the empirical distribution of typical particles as sharply as in the independent case (see Theorem \ref{thm: main ind empirical}). The key difference is that, for independent particles,  Chernoff bounds allow to control the probability of atypical clustering. In contrast, dependencies in branching random walks have to be dealt with. For this, we rely on the many-to-two principle to compute second moments, which we can then use to show that the set of atypical clusterings is small and negligible in the limit.

\subsection{Typical and atypical particles}

For constants $\delta,K>0$ define the set of typical particles in the $n$-th generation
$$
\typ_n := \typ_{n,\delta,K} := \big\{ v\in\GWT_n:\,\text{for every } k\leq n \text{ it holds }\big|-\log(\mu^{*k}(X_{v_k}))-k\avez \big|\leq k\delta+K\big\},
$$
and the set of odd (or atypical) particles in the $n$-th generation as 
$$
\odd_n:=\odd_{n,\delta,K}:=\GWT_n\setminus\typ_{n,\delta,K}.
$$
Analogously to the independent random walks setting, the empirical distribution can be written as a convex combination of the empirical distributions of typical and odd particles, i.e.,
$$
\empirical_n = \frac{\#\typ_n}{Z_n}\empirical_n^{\text{typ}}+\frac{\#\odd_n}{Z_n}\empirical_n^{\text{odd}},
$$
where for $x\in G$ we write
$$
\empirical_n^{\text{typ}}(x) :=\empirical_{n,\delta,K}^{\text{typ}}(x):=\frac{1}{\#\typ_{n}}\sum_{v\in\typ_{n}}\mathds1_{\{X_v=x\}},
$$
and
$$
\empirical_n^{\text{odd}}(x) :=\empirical_{n,\delta,K}^{\text{odd}}(x) :=\frac{1}{\#\odd_{n}}\sum_{v\in\odd_{n}}\mathds1_{\{X_v=x\}}.
$$
Moreover, we write $M_n^{\text{typ}}(x)=M_{n,\delta,K}^{\text{typ}}(x)$ and $M_n^{\text{odd}}(x)=M_{n,\delta,K}^{\text{odd}}(x)$ for the total number of typical/odd particles in $x\in G$, i.e. $ M_n^{\text{typ}} = \#\typ_n\cdot\empirical_n^{\text{typ}}$ and $M_n^{\text{odd}}=\#\odd_n\cdot\empirical_n^{\text{odd}}$. We call the event that defines a typical particle \emph{the entropic tube event}, which for $\delta$ and $K$ as above, we define as
$$
\ent_{n}:=\ent_{n,\delta,K}:=\big\{\text{for every } k\leq n \text{ it holds } \big|-\log\mu^{*k}(Y_k)-k\avez \big|\leq k\delta+K\big\},
$$
where we recall that $(Y_n)_{n\in\N}$ is the random walk on $G$ with step distribution $\mu$. 
The next result establishes that the entropic tube event occurs with high probability.
\begin{lemma}\label{lem: entropic-tube-small}
Under Assumption \ref{ass: finite entropy}, for any $\delta,\varepsilon>0$ there exists $K>0$ such that for all $n\in\mathbb{N}$,
$$\mathbb{P}(\ent_{n,\delta,K})\geq 1-\varepsilon.$$
\end{lemma}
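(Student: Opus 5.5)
The plan is to deduce the statement from the almost sure Shannon theorem \cite[Theorem 2.1]{KV83}, namely $-\log\mu^{*k}(Y_k)/k\to\avez$ almost surely, using the additive constant $K$ to absorb all early times.

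\emph{Step 1: define a random threshold.} Fix $\delta,\varepsilon>0$. By the Shannon theorem, almost surely there is a (random) finite index
$$
N:=\sup\big\{k\in\N:\ \big|-\log\mu^{*k}(Y_k)-k\avez\big|>k\delta\big\}
$$
(with $\sup\emptyset:=0$). Since $N<\infty$ almost surely, we can choose a deterministic $k_0$ with $\Prob(N>k_0)\leq\varepsilon/2$. On the event $\{N\le k_0\}$, every $k>k_0$ already satisfies the tube inequality, even with $K=0$.

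\emph{Step 2: control the finitely many early times.} For each $k\le k_0$, the random variable $-\log\mu^{*k}(Y_k)$ is almost surely finite, because $Y_k\in\textsf{supp}(\mu^{*k})$. It is also nonnegative. Hence the finite maximum
$$
R:=\max_{k\le k_0}\big|-\log\mu^{*k}(Y_k)-k\avez\big|
$$
is an almost surely finite random variable. We then choose $K$ such that $\Prob(R>K)\le\varepsilon/2$. Note that $\avez<\infty$ under Assumption \ref{ass: finite entropy}.

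\emph{Step 3: combine.} On the event $\{N\le k_0\}\cap\{R\le K\}$, the inequality $|-\log\mu^{*k}(Y_k)-k\avez|\le k\delta+K$ holds for every $k\in\N$. In particular, this event is contained in $\ent_{n,\delta,K}$ for every $n$ simultaneously. Therefore $\Prob(\ent_{n,\delta,K})\ge 1-\varepsilon$ uniformly in $n$.

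The only substantive input is the almost sure, pathwise version of Shannon's theorem; convergence in probability would not be enough, because the statement requires uniformity over all $k\le n$ for arbitrary $n$. Since the paper already states the almost sure version under \ref{ass: finite entropy}, I expect no real obstacle. The one point to handle carefully is the uniformity in $n$, and it follows because we bound the probability of a single event, independent of $n$, that lies inside every $\ent_{n,\delta,K}$.
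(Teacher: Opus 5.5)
Your proof is correct and follows essentially the same route as the paper. Both arguments use the almost sure Shannon theorem to make the bound $k\delta$ hold for all times beyond a deterministic cutoff (the paper phrases this through the decreasing events $\ent'_{n,\delta,N_0}$, you through the almost surely finite last-exit index $N$), and both absorb the finitely many early times into $K$. The only minor difference is that the paper controls the early times by Markov's inequality, using integrability of $-\log\mu^{*k}(Y_k)$, whereas you need only its almost sure finiteness.
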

\begin{proof}
We start by modifying slightly the event $\ent_{n,\delta,K}$ as follows. Let
$$
\ent_{n,\delta,N_0}':=\big\{\text{for every }N_0< k\leq n \text{ it holds }\big|-\log \mu^{*k}(Y_k)-k\avez\big|\leq k\delta\big\},
$$
where $N_0\in\N$ and consider the event
\[
H_{N_0,K}:=\big\{\text{for every }k\leq N_0\text{ it holds }\big|-\log\mu^{*k}(Y_k)-k\avez\big|\leq K\big\}\,,
\]
Then we get 
\begin{align*}
    \Prob(\ent_{n,\delta,N_0}')&=\Prob(\ent_{n,\delta,N_0}'\cap H_{N_0,K})+\Prob(\ent_{n,\delta,N_0}'\cap H_{N_0,K}^c) \\
    & \leq \Prob(\ent_{n,\delta,K}\cap H_{N_0,K})+\Prob(\ent_{n,\delta,N_0}'\cap H_{N_0,K}^c) \\
    & \leq \Prob(\ent_{n,\delta,K})+ \Prob(H_{N_0,K}^c).
\end{align*}
Markov's inequality together with a union bound implies
$$
\Prob(H_{N_0,K}^c)\leq \frac{1}{K}\sum_{k=1}^{N_0}\E\big[|-\log\mu^{*k}(Y_k)-k\avez|\big],
$$
and so it holds for any fixed $N_0$, that $\lim_{K\rightarrow\infty}\Prob(H_{N_0,K}^c)=0$. Therefore, it suffices to show that for any $\delta>0$ and any $\varepsilon>0$, there exists a $N_0$, such that for any $n\in\N$ we have $\Prob(\ent_{n,\delta,N_0}')\geq 1-\varepsilon$. First note that the events $\ent_{n,\delta,N_0}'$ are decreasing and so it suffices to show that there exists $N_0$ big enough such that $\Prob(\ent_{\delta,N_0}')\geq 1-\varepsilon$, where $\ent_{\delta,N_0}' = \{\text{for every }k> N_0 \text{ it holds }|-\log\mu^{*k}(Y_k)-k\avez|\leq k\delta\}$. This follows easily from Shannon's theorem \cite[Theorem 2.1]{KV83}, i.e. from the fact that $-\log\mu^{*k}(Y_k)/k\to \avez$ almost surely, and so the claim follows.
\end{proof}
We use this result to show that the ratio of odd particles to all particles converges and can be made arbitrarily small by choosing $K>0$ sufficiently large.
\begin{lemma}\label{lem: lemma on typical and odd particles}
The following limits exist for every $\delta,K>0$ almost surely
\begin{align*}
W^{\text{typ}}:= W_{\delta,K}^{\text{typ}}:=\lim_{n\to\infty}\frac{\#\typ_n}{\rho^n},&&W^{\text{odd}} := W_{\delta,K}^{\text{odd}} := \lim_{n\to\infty}\frac{\#\odd_n}{\rho^n}.
\end{align*}
Moreover, for the limit $W$ of the population martingale $W_n=Z_n/\rho^n$ it holds $W = W^{\text{typ}}+W^{\text{odd}}$,
and also the following limits hold almost surely
\begin{align*}
\lim_{K\to\infty} W_{\delta,K}^{\text{typ}} = W,& &\lim_{K\to\infty} W_{\delta,K}^{\text{odd}} = 0. 
\end{align*}
\end{lemma}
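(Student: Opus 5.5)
Since $\#\typ_n+\#\odd_n=Z_n$ and $Z_n/\rho^n\to W$ almost surely, it suffices to prove that $\#\typ_n/\rho^n$ converges almost surely. The key observation is that typicality is hereditary: if $v\in\typ_{n+1}$, then its parent $v_n$ lies in $\typ_n$, because the tube condition for $v$ includes the conditions for $k\le n$, which concern only ancestors. Thus $\typ_{n+1}\subset\bigcup_{u\in\typ_n}\{\text{children of }u\}$.

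I will use this to build, for fixed $m$, a supermartingale-like comparison. Consider
\[
A_n^{(m)}:=\rho^{-(n+m)}\sum_{u\in\typ_n}\#\{\text{descendants of }u\text{ in generation }n+m\}.
\]
This quantity is nonincreasing in $n$ up to the $m$-shift. More directly, define the random variable
\[
\widetilde W_n:=\sum_{u\in\typ_n}\rho^{-n}W^{(u)},
\]
where $W^{(u)}$ is the martingale limit of the subtree rooted at $u$. By heredity, $\widetilde W_{n+1}\le \widetilde W_n$, since each $u\in\typ_n$ satisfies $\rho^{-n}W^{(u)}=\sum_{\text{children }w}\rho^{-(n+1)}W^{(w)}$, and we only drop children. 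Hence $\widetilde W_n$ decreases to a limit $W^{\text{typ}}$.

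It remains to show $\widetilde W_n-\#\typ_n/\rho^n\to0$ almost surely. Conditional on $\mathcal F_n$ (tree and positions up to generation $n$), the $W^{(u)}-1$ are i.i.d., centered, with variance $\sigma^2<\infty$ by (A3). Therefore
\[
\E\bigl[(\widetilde W_n-\#\typ_n\rho^{-n})^2\mid\mathcal F_n\bigr]=\sigma^2\#\typ_n\rho^{-2n}\le\sigma^2 Z_n\rho^{-2n}.
\]
Since $\E Z_n=\rho^n$, the expectations are summable, and Borel--Cantelli (via Chebyshev) gives the claimed almost sure convergence. The same argument applied to $\odd_n$, or simply subtraction, yields $W^{\text{odd}}=W-W^{\text{typ}}$.

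For the limit $K\to\infty$, note that $\typ_{n,\delta,K}$ increases in $K$, so $W^{\text{typ}}_{\delta,K}$ is monotone in $K$ and bounded by $W$. The limit therefore exists, and it suffices to show $\E[W-W^{\text{typ}}_{\delta,K}]=\E W^{\text{odd}}_{\delta,K}\to0$. Using Fatou's lemma (or the $L^2$ convergence above) together with the many-to-one formula, conditional on $\GWT$ each $v\in\GWT_n$ has a trajectory distributed as $(Y_k)_{k\le n}$, so
\[
\E\#\odd_n=\E[Z_n]\,\Prob(\ent_{n,\delta,K}^c)=\rho^n\Prob(\ent_{n,\delta,K}^c).
\]
Hence $\E W^{\text{odd}}\le\liminf_n\rho^{-n}\E\#\odd_n\le\varepsilon$ for $K$ large, uniformly in $n$, by Lemma~\ref{lem: entropic-tube-small}. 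Since $W^{\text{odd}}_{\delta,K}\ge0$ is monotone decreasing in $K$, almost sure convergence to $0$ follows.

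The main obstacle is the almost sure existence of the limit. The decreasing-sequence trick via subtree limits, together with the second-moment comparison, should settle it. One must check that $W^{(u)}$ is independent of the positions, which holds because the tube event for $u$ depends only on the ancestral line of $u$.
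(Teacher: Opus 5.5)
Your proof is correct. The $K\to\infty$ part coincides with the paper's: monotonicity in $K$, the many-to-one identity $\E[\#\odd_n]=\rho^n\Prob(\ent_{n,\delta,K}^c)$, Fatou's lemma and Lemma~\ref{lem: entropic-tube-small}.

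Where you differ is the existence of the limit in $n$. The paper uses the same heredity observation, phrased as ``offspring of odd particles are odd'', to get $\E[\#\odd_{n+1}\mid\mathcal{F}_n]\ge\rho\,\#\odd_n$. Thus $\#\odd_n/\rho^n$ is a nonnegative submartingale bounded in $L^1$ (since $\#\odd_n\le Z_n$ and $\E[Z_n/\rho^n]=1$), and Doob's convergence theorem finishes at once. Equivalently, your own remark $\typ_{n+1}\subset\{\text{children of }\typ_n\}$ already gives $\E[\#\typ_{n+1}\mid\mathcal{F}_n]\le\rho\,\#\typ_n$, so $\#\typ_n/\rho^n$ is a nonnegative supermartingale.

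You instead build the pathwise nonincreasing sequence $\widetilde W_n=\rho^{-n}\sum_{u\in\typ_n}W^{(u)}$. You then compare it with $\#\typ_n/\rho^n$ through the conditional variance $\mathrm{Var}(W)\,\#\typ_n\,\rho^{-2n}$ and Borel--Cantelli. This is valid, because the $W^{(u)}$, $u\in\GWT_n$, are i.i.d.\ copies of $W$ independent of $\mathcal{F}_n$, and $\typ_n$ is $\mathcal{F}_n$-measurable. However, it spends assumption~\ref{ass: sec mom GW} to get $\E W=1$ and $\mathrm{Var}(W)<\infty$, whereas the paper's existence step needs only a finite mean.

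What your route buys is an explicit description of $W^{\text{typ}}_{\delta,K}$ as the decreasing limit of $\rho^{-n}\sum_{u\in\typ_n}W^{(u)}$. It also gives a quantitative $L^2$ rate $O(\rho^{-n/2})$ for the difference $\#\typ_n/\rho^n-\widetilde W_n$, which the submartingale argument does not provide.
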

\begin{proof}
Since $Z_n = \#\typ_n+\#\odd_n$ it clearly suffices to prove that the limit $\lim_{n\rightarrow\infty}\#\odd_n/\rho^n$ (which depends on $K$) exists and it converges to $0$, as $K$ goes to infinity. Observe that any offspring of an odd particle is itself an odd particle. Thus, for every $n\in\N$ it holds
$$
\odd_{n+1}\supset \bigcup_{v\in\odd_n}\{w\in\GWT_{n+1}:\,v<w\},
$$
and if $\mathcal{F}_n$ is the $\sigma$-algebra generated by the branching random walk up to time $n$, then
$$
\E\left[\#\odd_{n+1}\,|\,\mathcal{F}_n\right] \geq \rho\cdot\#\odd_n,
$$
which shows that $\#\odd_n/\rho^n$ is a submartingale with bounded expectation, and so the limit exists almost surely for any $K>0$.
Notice that by the definition of the entropic tube event, the family of limits $(W^{\text{odd}}_{\delta,K})_{K>0}$ is monotonically decreasing in $K$. Thus, the limit $\lim_{K\rightarrow\infty}W^{\text{odd}}_{\delta,K}$ exists almost surely. Fatou's lemma together with Lemma \ref{lem: expectation of empirical} and Lemma \ref{lem: entropic-tube-small} yield
$$\E[\lim_{K\to\infty}W_{\delta,K}^{\text{odd}}]=\E\Big[\liminf_{K\to\infty}W_{\delta,K}^{\text{odd}}\Big]\leq \liminf_{K\to\infty}\E\Big[W_{K,\delta}^{\text{odd}}\Big] \leq \liminf_{K\to\infty}\sup_{n\in\N}\Prob(\ent_{n,\delta,K}^c) = 0,$$
which concludes the proof.
\end{proof}
The next lemma provides an upper bound on the $L^2$-norm of $M_n^{typ}$, which will be crucial for later proofs.

\begin{lemma}\label{lem: two norm typical}
Suppose Assumptions \ref{ass: finite entropy}, \ref{ass: almost sure survival}, and \ref{ass: sec mom GW} hold. Then there exists a universal constant $C > 0$ such that, for all $n\in\N$
$$
\E\Big[\sum_{x\in G}M_{n}^{\text{typ}}(x)^2\Big]\leq C\rho^n e^{2n\delta+2K}\sum_{j=0}^n\big(\e^{-\avez}\rho\big)^j.
$$
\end{lemma}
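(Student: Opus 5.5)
The plan is a many-to-two computation. Write
$$\sum_{x}M_n^{\text{typ}}(x)^2=\sum_{u,v\in\typ_n}\mathds1_{\{X_u=X_v\}},$$
and split the pairs $(u,v)$ according to the generation $j=|u\wedge v|\in\{0,\dots,n\}$ of their most recent common ancestor. The diagonal pairs $u=v$ correspond to $j=n$.

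For a fixed $j$, I condition on the Galton--Watson tree. The expected number of ordered pairs in $\GWT_n$ whose most recent common ancestor is at generation $j$ is at most
$$\E[Z_j]\cdot\E[\xi(\xi-1)]\cdot\rho^{2(n-j-1)}\le C'\rho^{2n-j}.$$
Here $\xi\sim\pi$, and finiteness of $\E[\xi(\xi-1)]$ is exactly where Assumption \ref{ass: sec mom GW} enters. For $j=n$ the count is simply $\E[Z_n]=\rho^n$, which also has the form $\rho^{2n-j}$. Since the displacements are independent of the tree, it remains to bound, for one such pair, the probability that both particles are typical and meet.

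For a pair branching at generation $j$, both particles share the path up to the ancestor $w$ at position $X_w=Y_j$. After that they follow independent increments $Y',Y''$, each distributed as $\mu^{*(n-j)}$. I drop all typicality constraints except one: the ancestor itself must be typical at time $j$, so $\mu^{*j}(Y_j)\le e^{-j\avez+j\delta+K}$. The meeting event is $Y'=Y''$.

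The key step is to control the intersection of these two events. The naive bound $\sum_y\mu^{*(n-j)}(y)^2$ loses all the gain, because it stays bounded below on amenable-type groups. The fix is to keep the ancestor's typicality coupled with the last-time typicality at generation $n$ for both particles. At time $n$ we have $\mu^{*n}(X_u)\le e^{-n\avez+n\delta+K}$. My first attempt is to use time-reversal/convolution: write $\mu^{*n}(x)=\sum_y\mu^{*j}(y)\mu^{*(n-j)}(y^{-1}x)$ to bound
$$\Prob(X_u=X_v=x,\ \text{both typical})\le \sum_y\mu^{*j}(y)\,\mu^{*(n-j)}(y^{-1}x)^2\mathds1_{\{\text{typ}\}}.$$
Then I bound one factor $\mu^{*j}(y)\mu^{*(n-j)}(y^{-1}x)\le\mu^{*n}(x)\le e^{-n\avez+n\delta+K}$ using typicality at time $n$. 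The remaining sum $\sum_{x,y}\mu^{*(n-j)}(y^{-1}x)\mathds1\{\ldots\}$ is still not a probability. So instead I will keep $\mu^{*j}(y)$ as the probability weight and bound $\mu^{*(n-j)}(y^{-1}x)\le \mu^{*n}(x)/\mu^{*j}(y)$. This uses typicality of $y$ at time $j$ (lower bound $\mu^{*j}(y)\ge e^{-j\avez-j\delta-K}$) and at time $n$ (upper bound on $\mu^{*n}(x)$). It yields
$$\Prob(\text{meet},\ \text{typical})\le e^{-(n-j)\avez+(n+j)\delta+2K}\le e^{-(n-j)\avez}e^{2n\delta+2K}.$$

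Multiplying by the pair count $\rho^{2n-j}$ gives $\rho^n e^{2n\delta+2K}(\rho e^{-\avez})^{n-j}$. Summing over $j$ and reindexing $j\mapsto n-j$ produces the claimed bound $C\rho^n e^{2n\delta+2K}\sum_{j=0}^n(\e^{-\avez}\rho)^j$. The diagonal term is absorbed because it contributes $\rho^n$, which is the $j=0$ term after reindexing.

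The main obstacle is the meeting estimate. One must make sure that the two-sided entropic tube, namely the lower bound at the branching time $j$ and the upper bound at the final time $n$, is applied to a genuine probability sum without double counting. A careful Fubini over $(y,x)$ is needed, with $\mu^{*j}(y)\mu^{*(n-j)}(y^{-1}x)$ serving as the path probability of the first particle.
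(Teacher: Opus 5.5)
Your proof is correct and is essentially the paper's argument. You split pairs by the generation $j$ of their most recent common ancestor and bound the expected number of such pairs by $C\rho^{2n-j}$ using the second moment of $\pi$. You then apply Chapman--Kolmogorov with the lower entropic bound $\mu^{*j}(y)\ge \e^{-j(\avez+\delta)-K}$ at the split point and the upper bound $\mu^{*n}(x)\le \e^{-n(\avez-\delta)+K}$ at the meeting point, which gives $\mu^{*(n-j)}(y^{-1}x)\le \e^{-(n-j)\avez+(n+j)\delta+2K}$ while $\mu^{*j}(y)\mu^{*(n-j)}(y^{-1}x)$ sums to $1$ as the probability weight. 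The differences are cosmetic: you count ordered pairs exactly via $\E[\xi(\xi-1)]$ rather than through the paper's bound $\sum_{u\in\GWT_j}(Z_n^{(u)})^2$, and your earlier remark that only the ancestor's typicality is kept is superseded by your final step, which correctly uses both constraints.
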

\begin{proof}
For $x\in G$ we have
$$
M_n^{\text{typ}}(x)^2=\sum_{v,w\in\mathbb{T}_n}\mathds1_{\{X_v=x,v\text{ is typical}\}}\mathds1_{\{X_w=x,w\text{ is typical}\}}.$$
We partition $\GWT_n\times\GWT_n$ as follows. Set
$$
\mathcal{N}_n^{(k)}:=\{(v,w)\in\GWT_n\times\GWT_n:\,|v\wedge w|=k\},
$$
for the pairs $(v,w)$ of vertices in the $n$-th generation with most recent common ancestor in the $k$-th generation, and write  $N_n^{(k)}$ for the cardinality of $\mathcal{N}_n^{(k)}$. It clearly holds that $(\mathcal{N}_n^{(k)})_{k\leq n}$ is a partition of $\GWT_n\times\GWT_n$. Next we estimate the expectation of $N_n^{(k)}$. For given $u\in\GWT_k$, we write $Z_n^{(u)}$ for the number of descendants of $u$ in generation $n$. Then conditioned on $\GWT_k$, we have $Z_n^{(u)}\sim Z_{n-k}$, and the random variables $(Z_n^{(u)})_{u\in\GWT_k}$ are jointly independent. For every $(v,w)\in\mathcal{N}_n^{(k)}$ it holds that $v$ and $w$ have a common ancestor in generation $k$, thus 
$$N_n^{(k)}\leq \sum_{u\in\GWT_k}\big(Z_n^{(u)}\big)^2.$$
Conditioning on $\GWT_k$ gives $\E[N_n^{(k)}|\GWT_k]\leq Z_k\E[Z_{n-k}^2]$, and consequently
$$
\E\big[N_n^{(k)}\big]\leq \rho^k \E[Z_{n-k}^2] \leq C\rho^{2n-k}
$$
for some constant $C>0$, whose existence follows from the second moment condition \ref{ass: sec mom GW}. Using that $(\mathcal{N}_n^{(k)})_{k\leq n}$ partitions $\GWT_n\times\GWT_n$ gives 
\begin{align*}
\mathbb{E}\Big[M_n^{\text{typ}}(x)^2\Big]&=\mathbb{E}\bigg[\sum_{v,w\in\mathbb{T}_n}\mathds1_{\{X_v=x,v \text{ is typical}\}}\mathds1_{\{X_w=x,w\text{ is typical}\}}\bigg]\\
&=\mathbb{E}\bigg[\sum_{k=0}^{n}\sum_{(v,w)\in\mathcal{N}_n^{(k)}}\mathds1_{\{X_v=x,v \text{ is typical}\}}\mathds1_{\{X_w=x,w\text{ is typical}\}}\bigg]\\
&=\sum_{k=0}^{n}\mathbb{E}\bigg[\sum_{(v,w)\in\mathcal{N}_n^{(k)}}\mathds1_{\{X_v=x,v \text{ is typical}\}}\mathds1_{\{X_w=x,w\text{ is typical}\}} \bigg].
\end{align*}
Computing the expectations for every $k\in\N$, by first conditioning on $\GWT$, gives
\begin{align*}
\mathbb{E}&\bigg[\sum_{(v,w)\in\mathcal{N}_n^{(k)}}\mathds{1}_{\{X_v=x,v \text{ is typical}\}}\mathds{1}_{\{X_w=x,w\text{ is typical}\}}\,\Big|\,\GWT\bigg] \\
& = \sum_{(v,w)\in\mathcal{N}_n^{(k)}}\Prob(X_v=X_w=x,\,v,w \text{ are typical}\,|\,\GWT).
\end{align*}
Note that if $v,w\in\mathbb{T}_n$ split at time $k$ at some vertex $y\in G$, the condition that both $v$ and $w$ are typical implies that $\mu^{*k}(y)\geq \e^{-k(\avez+\delta)-K}$, and they can only intersect at vertices $x\in G$ for which $\mu^{*n}(x)\leq \e^{-n(\avez-\delta)+K}$. Using the Chapman-Kolmogorov equation, we get that for such $x,y$ it holds
$$\mu^{*(n-k)}(y^{-1}x)\leq\frac{\mu^{*n}(x)}{\mu^{*k}(y)}\leq e^{-(n-k)\avez+(n+k)\delta+2K}.$$
We set 
$$\mathcal{A}_n^{(k)}=\{y\in G:\, \mu^{*k}(y)\geq \e^{-k(\avez+\delta)-K}\}\quad \text{and}\quad  \mathcal{C}_n = \{x\in G:\,\mu^{*n}(x)\leq \e^{-n(\avez-\delta)+K}\},$$
and we write $Z=X_{v_k}=X_{w_k}$. By summing over all possible locations $y\in \mathcal{A}_n^{(k)}$ where the particles split, i.e. summing over all possible values of $Z$, we obtain for every $x\in\mathcal{C}_n$ that
\begin{align*}
\Prob(X_v=X_w=x,\,v,w\text{ are typical}\,|\,\GWT) &= \sum_{y\in\mathcal{A}_n^{(k)}}\Prob(X_v = X_w = x,\,v,w\text{ are typical}\,|\,Z=y,\GWT)\Prob(Z=y\,|\,\GWT) \\
& \leq \sum_{y\in\mathcal{A}_n^{(k)}} \Prob(X_v = X_w = x\,|\,Z=y,\GWT)\Prob(Z=y\,|\,\GWT)\\
& = \sum_{y\in\mathcal{A}_n^{(k)}} \mu^{*k}(y)\mu^{*(n-k)}(y^{-1}x)^2\\
& \leq \Big(\sum_{{y\in\mathcal{A}_n^{(k)}}} \mu^{*k}(y)\mu^{*(n-k)}(y^{-1}x)\Big)\e^{-(n-k)\avez+(n+k)\delta+2K} \\
& \leq  \Big(\sum_{y\in G} \mu^{*k}(y)\mu^{*(n-k)}(y^{-1}x)\Big)\e^{-(n-k)\avez+(n+k)\delta+2K}\\
&=\mu^{*n}(x)\e^{-(n-k)\avez+(n+k)\delta+2K}.
\end{align*}
This further implies that
$$
\mathbb{E}\left[M_n^{\text{typ}}(x)^2\,|\,\GWT\right] \leq \mu^{*n}(x)\sum_{k=0}^n N_n^{(k)}\e^{-(n-k)\avez+(n+k)\delta+2K},
$$
and after taking the expectation and summing over all vertices in $\mathcal{C}_n$ we get
\begin{align*}
\E\bigg[\sum_{x\in G}M_n^{\text{typ}}(x)^2\bigg] = \E\bigg[\sum_{x\in \mathcal{C}_n}M_n^{\text{typ}}(x)^2\bigg] &\leq C\sum_{x\in \mathcal{C}_n}\mu^{*n}(x)\sum_{k=0}^n \rho^{2n-k}\e^{-(n-k)\avez+(n+k)\delta+2K}\\
& \leq C\rho^n\e^{2n\delta+2K}\sum_{k=0}^n \rho^{n-k}\e^{-(n-k)(\avez+\delta)}\\
& \leq C\rho^n\e^{2n\delta+2K}\sum_{j=0}^n \rho^{j}\e^{-j\avez},
\end{align*}
and this proves the claim.
\end{proof}

For a given $c > 0$, we define the set of good sites of $G$ in the small-$\rho$ regime as
$$
\mathcal{G}_n^{(1)} := \mathcal{G}_{n,\delta,K,c}^{(1)}:=\big\{x\in G:\,M_n^{\text{typ}}(x)\leq (1+c)^n\big\},
$$
and the set of badly behaved sites in the small-$\rho$ regime as
$$
\mathcal{B}_n^{(1)}:=\mathcal{B}_{n,\delta,K,c}^{(1)}:=G\setminus \mathcal{G}_{n,\delta,K,c}^{(1)}.
$$
In the large-$\rho$ regime, for any $\varepsilon>0$ we define the set of good sites by
$$
\mathcal{G}_n^{(2)}:=\mathcal{G}_{n,\delta,K,\varepsilon}^{(2)}:=\big\{x\in G:\, M_n^{\text{typ}}(x)\leq \rho^n\e^{-n(\avez-\varepsilon)}\big\},
$$
and the set of badly behaved sites by
$$
\mathcal{B}_n^{(2)}:=\mathcal{B}_{n,\delta,K,\varepsilon}^{(2)} := G\setminus \mathcal{G}_{n,\delta,K,\varepsilon}^{(2)}.
$$

\begin{theorem}\label{thm: main BRW}
Assuming \ref{ass: finite entropy}, \ref{ass: almost sure survival}, and \ref{ass: sec mom GW}, the following holds.
\begin{enumerate}[label = (\roman*)]
\item Assume $\rho\leq\e^{\avez}$. Then for every $c > 0$ there exists $\delta_0>0$ such that, for all $\delta\leq \delta_0$, it holds that
$$
\lim_{K\to\infty}\Prob\big(\lim_{n\to\infty}\empirical_n^{\text{typ}}\big(\mathcal{G}_{n,\delta,K,c}^{(1)}\big) = 1\big) = 1.
$$
\item Assume $\rho> \e^{\avez}$. Then for every $\varepsilon>0$ there exists $\delta_0>0$ such that, for all $\delta\leq \delta_0$, it holds that
$$
\lim_{K\to\infty}\Prob\big(\lim_{n\to\infty}\empirical_n^{\text{typ}}\big(\mathcal{G}_{n,\delta,K,\varepsilon}^{(2)}\big) = 1\big) = 1.
$$
\end{enumerate}
\end{theorem}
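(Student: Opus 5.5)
The plan is to show that the typical mass sitting on bad sites, $M_n^{\text{typ}}(\mathcal{B}_n)$, is negligible compared with $\#\typ_n$. We control it through the second moment bound of Lemma \ref{lem: two norm typical} and Markov's inequality, then use Borel--Cantelli, and finally take $K\to\infty$ via Lemma \ref{lem: lemma on typical and odd particles}.

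\textbf{Step 1: bad mass via the $L^2$ norm.} On a bad site $x$ the count $M_n^{\text{typ}}(x)$ exceeds a threshold $T_n$. Here $T_n=(1+c)^n$ in regime (i), and $T_n=\rho^n\e^{-n(\avez-\varepsilon)}$ in regime (ii). Hence
$$
M_n^{\text{typ}}(\mathcal{B}_n)=\sum_{x\in\mathcal{B}_n}M_n^{\text{typ}}(x)\le \frac{1}{T_n}\sum_{x\in G}M_n^{\text{typ}}(x)^2 .
$$
Taking expectations and applying Lemma \ref{lem: two norm typical} gives
$$
\E\big[M_n^{\text{typ}}(\mathcal{B}_n)\big]\le C T_n^{-1}\rho^n\e^{2n\delta+2K}S_n,\qquad S_n=\sum_{j\le n}(\e^{-\avez}\rho)^j .
$$

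\textbf{Step 2: divide by $\rho^n$ in each regime.}
\begin{itemize}
\item In regime (i) we have $\rho\le\e^{\avez}$, so $S_n\le n+1$. Then $\E[M_n^{\text{typ}}(\mathcal{B}_n)]/\rho^n\le C\e^{2K}(n+1)\e^{2n\delta}(1+c)^{-n}$. This decays exponentially once $2\delta_0<\log(1+c)$.
\item In regime (ii), $S_n\le C'(\rho\e^{-\avez})^n$. The bound becomes $C''\e^{2K}\e^{n(2\delta-\varepsilon)}$, which decays exponentially once $\delta_0<\varepsilon/2$.
\end{itemize}
In both cases Markov's inequality with threshold $\e^{-\eta n}$, for a small $\eta>0$, gives summable probabilities. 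By Borel--Cantelli, $M_n^{\text{typ}}(\mathcal{B}_n)/\rho^n\to0$ almost surely, for every fixed $K$.

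\textbf{Step 3: normalize and let $K\to\infty$.} We have
$$
\empirical_n^{\text{typ}}(\mathcal{B}_n)=\frac{M_n^{\text{typ}}(\mathcal{B}_n)/\rho^n}{\#\typ_n/\rho^n},
$$
and the denominator converges to $W^{\text{typ}}_{\delta,K}$ by Lemma \ref{lem: lemma on typical and odd particles}. Hence on the event $\{W^{\text{typ}}_{\delta,K}>0\}$ we get $\empirical_n^{\text{typ}}(\mathcal{G}_n)\to1$. The same lemma gives $W^{\text{typ}}_{\delta,K}\to W$ almost surely as $K\to\infty$, with $W>0$ almost surely under \ref{ass: almost sure survival} and \ref{ass: sec mom GW}. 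Therefore $\Prob(W^{\text{typ}}_{\delta,K}>0)\to1$, which is exactly the statement, with $K$ entering only through the limit.

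\textbf{Main obstacle.} The delicate point is Step 2 in regime (ii). The factor $\e^{2n\delta}$ from the entropic tube, together with the geometric growth $(\rho\e^{-\avez})^n$ of $S_n$, must be beaten by the threshold, which forces $\delta_0<\varepsilon/2$. I would also double-check the case $\rho=\e^{\avez}$, where $S_n$ grows linearly; this only costs a polynomial factor and is harmless.
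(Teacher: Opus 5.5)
Your proposal is correct and follows the paper's proof essentially step for step. You bound the bad typical mass by $T_n^{-1}\sum_x M_n^{\text{typ}}(x)^2$ and apply Lemma~\ref{lem: two norm typical} with $\delta_0<\log(1+c)/2$ (resp.\ $\delta_0<\varepsilon/2$), then conclude by Borel--Cantelli and let $K\to\infty$ via Lemma~\ref{lem: lemma on typical and odd particles}; your explicit normalization by $\#\typ_n/\rho^n\to W^{\text{typ}}_{\delta,K}$ and your constant bound on the geometric sum in regime (ii) (where the paper uses $n+1$) are only cosmetic refinements.
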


Before turning to the proof, we explain how this theorem implies Theorem \ref{thm: main 1}.
\begin{proof}[Proof of Theorem \ref{thm: main 1}]
\textit{(i)} Let $\rho\leq \e^{\avez}$. By Lemma \ref{lem: trivial upper}
$$
\limsup_{n\to\infty}\frac{H(\empirical_n)}{n}\leq \lim_{n\to\infty}\frac{\log Z_n}{n} = \log\rho.
$$
To derive the corresponding lower bound, fix an arbitrary $c > 0$. By Theorem \ref{thm: main BRW}\textit{(i)}, for  $\delta>0$ sufficently small,  it holds that $\lim_n\empirical_n^{\text{typ}}(\mathcal{G}_n^{(1)}) = 1$ with high probability as long as $K>0$ is sufficiently large. Lemma~\ref{lem: Lemma on entropy} yields
\begin{align}\label{eq:empiricalentropy}
\liminf_{n\to\infty}\frac{H(\empirical_n)}{n}\geq \liminf_{n\to \infty} \frac{\#\typ_n}{Z_n}H\big( \empirical_n^{\text{typ}}\big)
\end{align}
and $H\big( \empirical_n^{\text{typ}}\big)\ge -\sum_{x\in \mathcal{G}_n^{(1)}}\empirical_n^{\text{typ}}(x)\log \empirical_n^{\text{typ}} \ge -\empirical_n^{\text{typ}}\big(\mathcal{G}_n^{(1)}\big)\log \big(\frac{(1+c)^n}{\rho^n}\big)$ by the definition of good sites in the small-$\rho$ regime.
Lemma~\ref{lem: lemma on typical and odd particles} gives $\lim_{n\to \infty} Z_n^{-1}\#\typ_n=W^{-1}W_{\delta,K}^{\text{typ}}$ arbitrarily close to $1$ for large enough $K$. Therefore, when $\lim_{n\to\infty}\empirical_n^{\text{typ}}\big(\mathcal{G}_{n,\delta,K,c}^{(1)}\big) = 1$, which by Theorem~\ref{thm: main BRW}(i) is an event of arbitrarily close to $1$ probability as $K$ is chosen large enough,  it holds:
\begin{align*}
\liminf_{n\to\infty}\frac{H(\empirical_n)}{n}&\geq \liminf_{n\to\infty} \frac{\#\typ_n}{Z_n}\cdot\empirical_n^{\text{typ}}\big(\mathcal{G}_n^{(1)}\big)\cdot(\log\rho-\log(1+c))\\
&\geq W^{-1}W_{\delta,K}^{\text{typ}}(\log\rho-\log(1+c)).
\end{align*}
Letting $K$ go to infinity and using Lemma \ref{lem: lemma on typical and odd particles} finishes the proof as $c>0$ can be made arbitrarly small.

\textit{(ii)} Let $\rho> \e^{\avez}$.  For the upper bound, the definition of $\typ_n$ yields
$$
\e^{n(\avez+\delta)+K} =\sum_{x\in G}\e^{n(\avez+\delta)+K}\mu^{*n}(x) \geq \sum_{x\in\textsf{supp}(\empirical_n^{\text{typ}})}\e^{n(\avez+\delta)+K}\mu^{*n}(x)\geq \#\textsf{supp}(\empirical_n^{\text{typ}}),
$$
which together with Lemma \ref{lem: Lemma on entropy} implies
\begin{align*}
\limsup_{n\to\infty}\frac{H(\empirical_n)}{n} &\leq \limsup_{n\to\infty}\Big(\frac{\#\typ_n}{Z_n}\frac{H(\empirical_n^{\text{typ}})}{n}+ \frac{\#\odd_n}{Z_n}\frac{H(\empirical_n^{\text{odd}})}{n}+\frac{\log 2}{n}\Big)\\
&\leq \limsup_{n\to\infty}\Big(\frac{\#\typ_n}{Z_n}(\avez+\delta)+\frac{K}{n}+\frac{\#\odd_n}{Z_n}\frac{\log Z_n}{n}+\frac{\log2}{n}\Big)\\
&=W^{-1}W_{\delta,K}^{\text{typ}}(\avez+\delta) + W^{-1}W_{\delta,K}^{\text{odd}}\log\rho.
\end{align*}
By letting $K\to\infty$ and using Lemma \ref{lem: lemma on typical and odd particles}, we obtain the desired upper bound. For the lower bound fix $\varepsilon>0$. We use inequality~(\ref{eq:empiricalentropy}) with the lower bound
\[
H(\empirical_n^{\text{typ}}) \ge -\sum_{x \in \mathcal{G}_{n,\delta,K,\varepsilon}^{(2)}} \empirical_n^{\text{typ}}(x)  \log  \empirical_n^{\text{typ}}(x) \ge -\empirical_n^{\text{typ}}\big( \mathcal{G}_{n,\delta,K,\varepsilon}^{(2)}\big) \log \Big(\frac{\rho^n \e^{-n(\avez-\varepsilon)}}{\#\typ_n} \Big).
\]

 By Theorem \ref{thm: main BRW}, for $\delta>0$ sufficiently small  and  $K>0$ sufficiently large, with high probability it holds $\lim_n \empirical_n^{\text{typ}}(\mathcal{G}_n^{(2)})=1$. Thus
\begin{align*}
\liminf_{n\to\infty}\frac{H(\empirical_n)}{n}&\geq \liminf_{n\to\infty} \frac{\#\typ_n}{Z_n}\cdot\empirical_n^{\text{typ}}\big(\mathcal{G}_{n,\delta,K,\varepsilon}^{(2)}\big)\cdot\Big(\avez-\varepsilon+\frac{1}{n}\log\frac{\#\typ_n}{\rho^n}\Big)\\
& = W^{-1}W_{\delta,K}^{\text{typ}}(\avez-\varepsilon),
\end{align*}
and letting $K\to\infty$ and using Lemma \ref{lem: lemma on typical and odd particles} we also obtain a matching lower bound, and so the claim is proven.
\end{proof}

\subsection{Small-$\rho$ regime}
\begin{proof}[Proof of Theorem \ref{thm: main BRW}{(i)}]
Assume $\rho\leq\e^{\avez}$. Recall that, for $x\in G$, $M^{\text{typ}}_n(x)$ is the number of typical particles at  $x$. It suffices to prove that 
$\lim_n M_n^{\text{typ}}(\mathcal{B}_n^{(1)})/\rho^n = 0$
almost surely, since  it holds $\lim_{K}\Prob(W_{\delta,K}^{\text{typ}}=0)=0$ and $\lim_K W_{\delta,K}^{\text{typ}} = W$ almost surely by Lemma~\ref{lem: lemma on typical and odd particles}. We have
$$
M_n^{\text{typ}}\big(\mathcal{B}_n^{(1)}\big) = \sum_{x\in\mathcal{B}_n^{(1)}}M^{\text{typ}}_n(x)\leq \sum_{x\in \mathcal{B}_n^{(1)}}M_n^{\text{typ}}(x)\frac{M_n^{\text{typ}}(x)}{(1+c)^n}\leq \frac{1}{(1+c)^n}\sum_{x\in G}M_n^{\text{typ}}(x)^2,
$$
which together with $\rho\e^{-\avez}\leq 1$ and Lemma \ref{lem: two norm typical}, implies the existence of a constant $C>0$ such that
$$
\E\bigg[\frac{M_n^{\text{typ}}\big(\mathcal{B}_n^{(1)}\big)}{\rho^n}\bigg] \leq C\e^{2K}\frac{n\e^{2n\delta}}{(1+c)^n}.
$$
Choosing $\delta_0 < \log(1+c)/2$, and using the Borel-Cantelli Lemma completes the proof.
\end{proof}

\subsection{Large-$\rho$ regime}
\begin{proof}[Proof of Theorem \ref{thm: main BRW}{(ii)}]
Assume $\rho>\e^{\avez}$. In view of Lemma \ref{lem: lemma on typical and odd particles},
it suffices to prove that almost surely $\lim_n M_n^{\text{typ}}(\mathcal{B}_n^{(2)})/\rho^n = 0$. Using the same approach as in the small-$\rho$ regime, we obtain
$$
M_n^{\text{typ}}\big(\mathcal{B}_n^{(2)}\big) = \sum_{x\in\mathcal{B}_n^{(2)}}M^{\text{typ}}_n(x)\leq \sum_{x\in \mathcal{B}_n^{(2)}}M_n^{\text{typ}}(x)\frac{M_n^{\text{typ}}(x)}{\e^{-n(\avez-\varepsilon)}\rho^n}\leq \frac{1}{\e^{-n(\avez-\varepsilon)}\rho^n}\sum_{x\in G}M_n^{\text{typ}}(x)^2.
$$
This, together with Lemma \ref{lem: two norm typical}, implies that there exists a constant $C>0$, such that 
$$
\E\bigg[\frac{M_n^{\text{typ}}\big(\mathcal{B}_n^{(2)}\big)}{\rho^n}\bigg] \leq C\e^{2K}\frac{\e^{2\delta n}}{\e^{\varepsilon n}} \sum_{j=0}^n \big(\e^{\avez}\rho^{-1}\big)^j \le C\e^{2K}(n+1)\frac{\e^{2\delta n}}{\e^{\varepsilon n}},
$$
since $\rho^{-1}\e^{\avez}\leq 1$. Choosing $\delta_0 < \varepsilon/2$ and applying Borel-Cantelli lemma completes the proof. 
\end{proof}

\subsection*{Comments and open questions}

Our results naturally lead to  further interesting research questions on the asymptotic entropy and the empirical distributions of branching random walks, a few of which we highlight below.

\textbf{Second order estimates for $H(\empirical_n)$.}
Although this paper proves that the almost sure limit $H(\empirical_n)/n$ exists and identifies its value, it is natural to ask for finer asymptotics. What are the correct normalizations $f(n)$ and $g(n)$ such that, in the small-$\rho$ regime, $(H(\empirical_n)-n\log\rho)/f(n)$ and, in the large-$\rho$ regime, $(H(\empirical_n)-n\avez)/g(n)$ converge in distribution as $n\to\infty$ to a nontrivial limitting random variable? Can the limiting variable be identified?

\textbf{Phase transition in the limit measure.}
As already mentioned in the introduction, the phase transition from weak to strong survival can also be described from the view of the boundary behaviour of branching random walks; see \cite{dussaule-wang-yang-brw-hyper, LimSetHyperbolic} once again for details. 
The natural question at this point is if the phase transition in the entropy is also reflected in the boundary behaviour of the branching random walk. In this case the quantity of interest would not be the limit set, but the limit measure $\empirical_\infty=\lim_n\empirical_n$, which exists as a weak limit in view of \cite{WoessKaim} and \cite{Candellero-Hutchcroft-BRW-2023}, and it is a random measure supported on the boundary of the underlying state space. The case of branching Brownian motion on hyperbolic spaces has been investigated in \cite{Geldbach-2026-BBM}.

\textbf{Question.}
Under the assumptions of Theorem \ref{thm: main 1}, is the limit empirical measure $\empirical_\infty$ singular with respect to the exit measure of the random walk $(G,\mu)$ when $\rho \le e^{\avez}$, and absolutely continuous with respect to that exit measure when $\rho > e^{\avez}$? We have strong evidence that this should be indeed the case for $\rho<e^{\avez}$. 

\textbf{Clustering of typical particles.}
Another natural question is whether Theorem \ref{thm: main BRW} and Theorem \ref{thm: main ind empirical} can be strengthened. More precisely: 
\begin{itemize}
    \item When $\rho \le e^{\avez}$, can we replace the “small exponential” bound in Theorem \ref{thm: main ind empirical}(i) showing that only polynomially many typical particles meet for both independent random walks and for branching random walks?
    \item When $\rho > e^{\avez}$, does the empirical distribution of the typical particles approximate the step distribution of the underlying random walk arbitrarily well, that is, can  Theorem \ref{thm: main BRW}(ii) be strengthened to Theorem \ref{thm: main ind empirical}(ii)?
\end{itemize}

\textbf{Asymptotic Rényi-entropies.} The $\alpha$-Rényi-entropy of a measure $\nu$ is defined as
$$\entropy(\alpha,\nu):=\frac{1}{1-\alpha}\log\sum_{g\in G}\nu(g)^\alpha,$$
and it extends the Shannon entropy in that taking the limit as $\alpha\to 1$ yields the usual Shannon entropy.
In \cite{MR4771974} it is shown that the asymptotic Renyi-entropies exist, i.e. for every $\alpha\in[0,\infty)$ the quantity $h(\alpha,\mu)=\lim_n H(\alpha,\mu^{*n})/n$ exists and the function $\alpha\mapsto h(\alpha,\mu)$ is continuous on $[0,\infty)\setminus\{1\}$ and decreasing. This paper deals with the case $\alpha=1$.  
We conjecture a similar phase transitions for all  $\alpha\geq 1$: if $\rho\leq \e^{h(\alpha,\mu)}$, then $\lim_n H(\alpha,\empirical_n)/n = \log\rho$ almost surely, while if $\rho>\e^{h(\alpha,\mu)}$ then $\lim_n H(\alpha,\mu^{*n})/n = h(\alpha,\mu)$ almost surely. For $\alpha = \infty$, the Rényi entropy equals the negative logarithm of the spectral radius, i.e., $h(\infty,\mu)=-\log r$, so the conjecture would imply that in the weak survival phase only subexponentially many particles coalesce, while in the strong survival phase the maximal number of particles coalescing is of order $r^n\rho^n$.
For $\alpha \in [0,1)$, we expect that, beyond the small-$\rho$ and large--$\rho$  regimes, there is also an intermediate regime governed by the large deviations rate function of the underlying random walk. However, we do not yet have sufficient evidence to formulate a conjecture.

\paragraph{Acknowledgments and funding information.}
	Thanks to Hanna Oppelmayer and Nadia Fellin for discussions at an early stage of this project.
	This research was funded in part by the Austrian Science Fund (FWF) [10.55776/\allowbreak{}PAT3123425]. The research of J. Brieussel was funded in part by the ANR-22-CE40-0004 GoFR, the ANR-24-CE40-3137
PLAGE and the JSPS Invitational Fellowship L25508. 
    
\bibliographystyle{alpha}
\bibliography{lit}

\texttt{Jeremie Brieussel}, University of Montpellier, France.
\texttt{jeremie.brieussel@umontpellier.fr}

\texttt{Robin Kaiser}, Technische Universität München, Germany.
\texttt{ro.kaiser@tum.de}

\texttt{Martin Klötzer}, Universität Innsbruck, Austria. 
\texttt{Martin.Kloetzer@uibk.ac.at}

\texttt{Ecaterina Sava-Huss}, Universität Innsbruck, Austria.
\texttt{Ecaterina.Sava-Huss@uibk.ac.at}

\end{document}